\documentclass[a4paper,10pt,reqno]{amsart}

\textwidth16cm \textheight21.1cm \oddsidemargin-0.05cm
\evensidemargin-0.05cm

\usepackage[utf8]{inputenc}
\usepackage[T1]{fontenc}
\usepackage{amsthm}
\usepackage{amsmath}
\usepackage{amssymb}
\usepackage[inline]{enumitem}
\usepackage{comment}
\usepackage{hyperref}
\usepackage{fancyhdr}
\usepackage{mathrsfs}
\usepackage{stmaryrd}
\usepackage[normalem]{ulem}
\usepackage{xcolor}
%\usepackage{pdfcomment}%[final]{pdfcomment}
%[%
%   final%
%]{pdfcomment}
%\usepackage{refcheck}

\theoremstyle{definition}

\newtheorem{theorem}{Theorem}[section]
\newtheorem{lemma}[theorem]{Lemma}
\newtheorem{proposition}[theorem]{Proposition}
\newtheorem{corollary}[theorem]{Corollary}

\theoremstyle{definition}
\newtheorem{definition}[theorem]{Definition}

\newtheorem{remark}[theorem]{Remark}

\pagestyle{fancy}
\fancyhf{}
\fancyhead[CO]{\textsc{Some applications of a new approach to factorization}}
\fancyhead[CE]{Laura \textsc{Cossu}}
\fancyhead[RO,LE]{\thepage}

\setlength{\headheight}{12pt}
\definecolor{blue-url}{RGB}{0,0,100}
\definecolor{red-url}{RGB}{100,0,0}
\definecolor{green-url}{RGB}{0,100,0}
\definecolor{light-yellow}{RGB}{255,255,128}
\definecolor{light-blue}{RGB}{193,255,255}
\definecolor{light-red}{RGB}{239,83,80}

\hypersetup{
	pdftitle={Some applications of a new approach to factorization},
	pdfauthor={L.~Cossu},
	pdfmenubar=false,
	pdffitwindow=true,
	pdfstartview=FitH,
	colorlinks=true,
	linkcolor=blue-url,
	citecolor=green-url,
	urlcolor=red-url
}

\renewcommand{\emptyset}{\varnothing}
\renewcommand{\setminus}{\smallsetminus}
\renewcommand{\,}{\kern 0.1em}

\DeclareMathOperator{\fix}{\textup{\textsc{f}ix}}
\DeclareMathOperator{\rfix}{\textup{r.\textsc{f}ix}}
\DeclareMathOperator{\End}{\textsc{e}nd}
\DeclareMathOperator{\id}{{\rm id}}
\DeclareMathOperator{\rk}{rk}
\DeclareMathOperator{\hgt}{ht}

\providecommand\llb{\llbracket}
\providecommand\rrb{\rrbracket}

%\textup{\text{ut}}}
\providecommand{\R}{\mathbb{R}}
\newcommand{\fin}{\mathrm{fin}}

\newcommand{\evid}[1]{\textsf{#1}}

% fake "displaystyle" environment for inline lists
{\newline\vspace{\abovedisplayskip}\hbox to \textwidth\bgroup\hss$\displaystyle}
{$\hss\egroup\vspace{\belowdisplayskip}}

% --- definition of double angular brackets
\makeatletter
\DeclareFontFamily{OMX}{MnSymbolE}{}
\DeclareSymbolFont{MnLargeSymbols}{OMX}{MnSymbolE}{m}{n}
\SetSymbolFont{MnLargeSymbols}{bold}{OMX}{MnSymbolE}{b}{n}
\DeclareFontShape{OMX}{MnSymbolE}{m}{n}{
	<-6>  MnSymbolE5
	<6-7>  MnSymbolE6
	<7-8>  MnSymbolE7
	<8-9>  MnSymbolE8
	<9-10> MnSymbolE9
	<10-12> MnSymbolE10
	<12->   MnSymbolE12
}{}
\DeclareFontShape{OMX}{MnSymbolE}{b}{n}{
	<-6>  MnSymbolE-Bold5
	<6-7>  MnSymbolE-Bold6
	<7-8>  MnSymbolE-Bold7
	<8-9>  MnSymbolE-Bold8
	<9-10> MnSymbolE-Bold9
	<10-12> MnSymbolE-Bold10
	<12->   MnSymbolE-Bold12
}{}

\let\llangle\@undefined
\let\rrangle\@undefined
\DeclareMathDelimiter{\llangle}{\mathopen}%
{MnLargeSymbols}{'164}{MnLargeSymbols}{'164}
\DeclareMathDelimiter{\rrangle}{\mathclose}%
{MnLargeSymbols}{'171}{MnLargeSymbols}{'171}
\makeatother
% --- end of the definition of double angular brackets

\hyphenation{
  ei-gen-value ei-gen-values ei-gen-prob-blem ei-gen-prob-blems ei-gen-space
  ei-gen-spaces e-ven-tu-al-ly in-e-qual-i-ty ab-so-lute-ly ex-ten-sion lin-e-ar
  ses-qui-lin-e-ar con-cen-trat-ed in-jec-ti-ve e-quiv-a-lence sub-space
  ex-trac-tion ul-ti-mate-ly func-tions e-quiv-a-lent ho-mo-mor-phism
  sem-i-val-u-at-ed sem-i-val-u-a-tion val-u-at-ed val-u-a-tion
  ul-tra-sem-i-val-u-at-ed ul-tra-sem-i-val-u-a-tion sys-tem-at-i-cal-ly
  el-e-men-ta-ry ir-ra-tion-al per-mu-ta-tion sem-i-norm sem-i-norm-ed norm-ed
}
%
% LaTeX tips:
%   - qed symbol when a proof ends with a displayed equ.: http://tex.stackexchange.com/questions/130641/
%
\begin{document}
\title{Some applications of a new approach to factorization}
\author{Laura Cossu}
\address{Department of Mathematics and Scientific Computing, University of Graz | Heinrichstrasse 36/III, 8010 Graz, Austria}
\email{laura.cossu@uni-graz.at}
\urladdr{https://sites.google.com/view/laura-cossu/}

%
%\thanks{This manuscript was created on: \today{} at \currenttime}
\subjclass[2020]{Primary 06F05, 13F15, 16U40, 20M13. Secondary 15A23}
%
% 11-XX: NUMBER THEORY
% 11Bxx: **Sequences and sets**
% 11B13: Additive bases, including sumsets
% 11B30: Arithmetic combinatorics; higher degree uniformity
% 11Rxx: **Algebraic number theory: global fields**
% 11R27: Units and factorization
% 11Pxx: **Additive number theory; partitions**
% 11P70: Inverse problems of additive number theory, including sumsets
%
% 13-XX: COMMUTATIVE ALGEBRA
% 13A05: Divisibility; factorizations
% 13Fxx: **Arithmetic rings and other special rings**
% 13F05: Dedekind, Prüfer, Krull and Mori rings and their generalizations
% 13F15: Rings defined by factorization properties (e.g., atomic, factorial, half-factorial)
%
% 16-XX: ASSOCIATIVE RINGS AND ALGEBRAS
% 16Hxx: **Algebras and orders**
% 16H10: Orders in separable algebras
% 16Uxx: **Conditions on elements**
% 16U30: Divisibility, noncommutative UFDs
%
% 20-XX: GROUP THEORY AND GENERALIZATIONS
% 20Mxx: **Semigroups**
% 20M13: Arithmetic theory of monoids
% 20M25: Semigroup rings, multiplicative semigroups of rings

\keywords{Factorization, irreducibles, preorders, artinianity, idempotents, full transformation semigroup}
\begin{abstract}
\noindent{As highlighted in a series of recent papers by Tringali and the author, fundamental aspects of the classical theory of factorization can be significantly generalized by blending the languages of monoids and preorders. Specifically, the definition of a suitable preorder on a monoid allows for the exploration of decompositions of its elements into (more or less) arbitrary factors.

We provide an overview of the principal existence theorems in this new theoretical framework. Furthermore, we showcase additional applications beyond classical factorization, emphasizing its generality. In particular, we recover and refine a classical result by Howie on idempotent factorizations in the full transformation monoid of a finite set.}
\end{abstract}
\maketitle
\thispagestyle{empty}

\section{Introduction}\label{sec:intro}
The classical theory of factorization investigates the arithmetic structure of commutative domains and their associated monoids of invertible ideals and modules. It is well known that if a commutative domain $D$ satisfies the ascending chain condition (ACC) on principal ideals (ACCP), then every non-zero non-invertible element $a\in D$ can be written as a non-empty finite product of the form $a=u_1 \cdots u_n$, where the $u_i$'s are {\it irreducible} elements of $D$, called {\it atoms} (that is, non-units that do not factor as a product of two non-units). Then $n$ is called a factorization length and the set $\mathsf L (a) \subseteq \mathbb{N}^+$ of all possible factorization lengths of $a$ is called the length set of $a$. If $D$ is Mori (i.e, $D$ satisfies the ACC on divisorial ideals), then all length sets are finite and the study of their structure is a key topic in factorization theory. A domain is Krull if and only if it is a completely integrally closed Mori domain, if and only if the multiplicative monoid of its non-zero elements is a Krull monoid. In Krull domains, every non-invertible element admits a factorization into atoms, and these factorizations are finitely many modulo units. A domain is factorial (i.e., every non-invertible element admits an essentially unique factorization into atoms) if and only if it is Krull with trivial class group. In general, it is the class group of a Krull domain and the distribution of height-one prime ideals in the classes that control the arithmetic of the monoid and the structure of its length sets. It is interesting to add that, in large classes of modules, direct-sum decompositions into indecomposable summands can be understood and studied successfully as a factorization problem into atoms. For instance, the monoid of finitely generated projective right $R$-modules over a semilocal ring $R$, where the operation is induced by the direct sum, is a Krull monoid. This allows the study of its arithmetic through the lens of factorization theory (see \cite{Fa06a}, \cite{Fa12a}, and \cite{BaWi13} for additional details). Since factorization problems in commutative domains are essentially multiplicative in nature, the technical machinery required for their investigation has been developed in the context of commutative and cancellative monoids, as outlined in the monograph \cite{Ge-HK06a}. Recall that a (not necessarily commutative) monoid $H$ is cancellative if $xz\ne zy$ and $zx\ne zy$ for all $x, z, y \in H$ with $x \ne y$.

In the last two decades, classical factorization theory, which originated in algebraic number theory and whose roots trace back to the 1960s, has broadened its scope in two main directions. Firstly, consider the semigroup $\mathcal I (D)$ representing non-zero ideals of a commutative domain $D$ with standard ideal multiplication. The semigroup $\mathcal I (D)$ is cancellative and factorial if and only if $D$ is a Dedekind domain, and it is cancellative if and only if $D$ is an almost Dedekind domain. However, for a general Noetherian domain, $\mathcal I (D)$ is not cancellative, yet it satisfies the property that, for $I$ and $J$ non-zero ideals of $D$,  $I J = I$ implies $J=D$. Similarly, numerous semigroups of modules lack cancellativity but satisfy the condition that $M \oplus N \cong M$ implies $N$ is the zero module. To address the study of such objects, commutative unit-cancellative monoids were introduced in \cite{F-G-K-T17}, and significant portions of classical factorization theory were developed within this broader framework (for a recent survey in this context, refer to \cite{GeZh19}). Note that the underlying idea of commutative unit-cancellative monoid comes from the concept of strongly reduced monoid in \cite{RGU99}. The second direction involves non-commutative ring theory. Questions revolve around determining whether all elements in a non-commutative ring have finitely many factorizations or whether all their length sets are finite. See, for example, \cite{Ba-Sm15} and \cite{Be-He-Le17}, together with \cite{B-B-N-S23a} and \cite{Sm19a}, where the focus is on Noetherian prime rings.

Factorization theory has witnessed further developments in the last few years. The new explorations have been motivated by a simple observation. In various areas of mathematics, it is often meaningful to decompose certain objects (of a diverse nature) into distinguished ``building blocks''. These building blocks, in a certain sense, cannot be further decomposed, but they are not necessarily ``irreducible'' in the sense of the classical theory. Here is a (non-exhaustive) list of examples:
\begin{enumerate}[label=\textup{(\alph{*})}, mode=unboxed]
\item\label{1} It is a basic result in algebra (see, e.g., \cite[Proposition (19.20)]{Lam01}) that, for a possibly non-commutative ring $R$, every Artinian or Noetherian $R$-module can be expressed as a direct sum of finitely many indecomposable submodules. Here, as usual, an $R$-module $M$ is considered \evid{indecomposable} if $M$ is neither a zero $R$-module nor the direct sum of two non-zero $R$-modules.

\vskip 0.05cm

\item\label{2} By the Lasker–Noether theorem (see, e.g., \cite[Chapter 7]{Hal-Ko98}), every proper ideal of a commutative Noetherian ring $R$ can be decomposed as an intersection, called primary decomposition, of finitely many primary ideals. Recall that an ideal $I$ of $R$ is \evid{primary} if it is a proper ideal and for each pair of elements $x,y\in R$ such that $xy\in R$, either $x$ or some power of $y$ lies in $I$.

\vskip 0.05cm

\item\label{3} By Erdos' seminal paper \cite{Er68}, every singular matrix over a field factorizes into a finite product of idempotent matrices (here and later, a matrix is always a square matrix and it is said to be singular if its determinant is zero). Subsequent work has extended this result to singular matrices over broader classes of domains (see, e.g., \cite{Laf83}, \cite{Fo91}, and \cite{Co-Za22}).

\vskip 0.05cm

\item\label{4} Howie established in \cite[Theorem I]{Ho66} that every non-invertible map on a finite set $X$ is a composition of idempotent maps fixing all but one element of $X$.

\vskip 0.05cm

\item\label{5} Every non-trivial permutation of a set of finite cardinality $n$ is a composition of less than $n$ transpositions (see, e.g., \cite[Proposition 2-35]{Rot06}).

\vskip 0.05cm

\item\label{6} The classical result commonly known as the Cartan–Dieudonné theorem (see \cite{Cartan} and \cite{Dieudonne}) establishes that every orthogonal transformation in an $n$-dimensional symmetric bilinear space can be described as the composition of at most $n$ reflections.

\vskip 0.05cm

\item\label{7} The decomposition of a numerical semigroup as an intersection of irreducible numerical semigroups is studied in \cite{RoBr02}. Irreducible numerical semigroups are either symmetric or pseudo-symmetric and have attracted the attention of many mathematicians.

\vskip 0.05cm

\item\label{8} It is proved in \cite[Proposition 2]{BGM21} that monotone functions over a finite poset $P$ decompose as a sum of characteristic functions of irreducible upper sets of $P$. 

\end{enumerate}
A method to effectively address factorization problems, stemming from both the traditional contexts outlined in the first two paragraphs and the settings \ref{1}-\ref{8}, within the same theoretical framework, is provided by the combined use of the language of monoids and preorders, as firstly done by Tringali in \cite{Tr20(c)}. This is part of a rather extensive program, already initiated in embryonic form in \cite{Fa-Tr18}, \cite{Tr19}, and \cite{An-Tr18}, aimed at extending the classical theory of factorization beyond its current boundaries. The new general approach to factorization based on preorders is the outcome of the collaborative work of Tringali and the author, who laid its foundations in a series of recent papers. In particular, the articles \cite{Tr20(c)}, \cite{Co-Tr-21(a)}, and \cite{Tri23(a)} focus on existence results and their applications, while \cite{Cos-Tri-2023(a)} and \cite{Cos-Tri-2023(b)} concentrate on the study of algebraic and arithmetic properties related to the non-uniqueness of factorization. Most notably, a concept of ``minimal factorization'', along with corresponding arithmetic invariants, was introduced in the latter two papers to counteract the blow-up phenomena typical of the arithmetic of non-cancellative monoids. To date, the terminology and abstract concepts of the new theory have already found application in the study of Boolean lattices \cite{Aj-Go23}, of the ideal class monoid of a numerical semigroup \cite{Casab-Danna-GarSan-2023}, and of special monoids of sets called power monoids \cite{Cos-Tri-2023(c)}.

In the present paper, we go back to handling results concerning the existence of specific factorizations. In Section \ref{sec: survey}, we recall the definition of a premonoid along with the associated fundamental concepts of units, quarks, and irreducibles. We then provide an overview of the main existence theorems (Theorems \ref{thm: FTF} and \ref{thm: strongly Artinian}) for factorizations into irreducibles on premonoids. This is done by extending the chain condition mentioned at the beginning to the new theoretical framework. In Section \ref{sec: examples}, we discuss several known applications of these theorems, highlighting the unifying role of the premonoid paradigm in the study of factorization. These applications include in fact classical factorization results as well as factorization results involving non-atomic (even idempotent) building blocks in non-cancellative monoids. In Section \ref{sec: idempotent}, after reviewing the definition of the $\rfix$-preorder from \cite{Co-Tr-21(a)}, we examine the full transformation monoid of a finite set. In Theorem~\ref{thm: a refinement of Howie}, we establish, as a corollary of Theorem~\ref{thm: strongly Artinian}, a quantitative refinement of the classical result by Howie mentioned in \ref{4}. The same preorder, when restricted to the monoid of permutations of a finite set, allows us to recover as a consequence of the same abstract theorem the factorization theorem alluded to in \ref{5}. Lastly, in Section \ref{sec: Cartan Dieudonné}, we consider the $\rfix$-preorder on the endomorphism ring of a free module of finite rank over a principal ideal domain. In particular, we show that the celebrated Cartan-Dieudonné theorem (see \ref{6} and Corollary~\ref{cor: CD}) can be obtained as a specialization of Theorem~\ref{thm: strongly Artinian}. These additional applications of the existence theorems for factorizations in premonoids, involving idempotent and invertible factors in non-commutative and non-cancellative monoids, complement the previous articles on the same topic and further support the robustness of the new theory.

\section{Premonoids and abstract factorization theorems}\label{sec: survey}
We take a monoid as a semigroup with an identity element. Unless otherwise specified, monoids are assumed to be multiplicative and do not possess any specific properties such as commutativity or cancellativity. We refer to Howie's monograph \cite{Ho95} for basics on semigroup theory. 

Let $H$ be a monoid with \evid{identity} $1_H$. An element $x\in H$ is a \evid{unit} of $H$ if $xy=yx=1_H$ for some $y\in H$, and it is a \evid{non-unit} otherwise. We use $H^\times$ to denote the group of units of $H$, and we say that $H$ is \evid{Dedekind-finite} if $xy\in H^\times$, for some $x,y\in H$, implies $x,y\in H^\times$. 
An element $e\in H$, on the other hand, is \evid{idempotent} if $e^2=e$. The only idempotent element of $H$ that is also a unit is the identity $1_H$. A non-unit element $a$ of $H$ is called an \evid{atom} if $a\ne xy$ for all non-units $x,y\in H$; and an \evid{irreducible} if $a\ne xy$ for all non-units $x,y\in H$ with $HxH\ne HaH\ne HyH$. Notably, every atom is an irreducible, and conversely, every irreducible in a cancellative commutative monoid is an atom (see Remark~\ref{rem:premonoids}\ref{rem:premonoids(1)} for further details). This explains the interchangeable use of the terms ``atom'' and ``irreducible'' in the context (commutative and cancellative) of classical factorization theory. A principal right ideal of a monoid $H$ is a set of the form $aH$, where $a$ is an element of $H$. Similarly, sets of the form $aH$ and $HaH$ are called principal left and principal (two-sided) ideals of $H$, respectively. The monoid $H$ is then said to satisfy the ascending chain condition on principal right ideals (ACCPR) if there does not exist an infinite sequence of principal right ideals of $H$ that is strictly increasing with respect to inclusion. Analogous definitions apply to the ascending chain condition on principal left ideals (ACCPL) and the ascending chain condition on principal ideals (ACCP), where principal right ideals are replaced by principal left ideals and principal ideals, respectively. We will see in the following how these chain conditions, which coincide in the commutative setting, play a pivotal role in the study of factorization.
Through the paper, $\mathbb N$ is the set of non-negative integers, and for $a,b\in \mathbb N\cup\{\infty\}$ we let $\llb a,b\rrb:=\{n\in\mathbb{N}: a\le n\le b\}$ be the \evid{discrete interval} from $a$ to $b$. Clearly $\llb a, b\rrb=\emptyset$ whenever $a$ is strictly smaller than $b$. 

In this section, we illustrate how the synergistic use of the language of monoids and preorders leads to general existence theorems for factorizations, with  applications far beyond the scope of classical theory. 

\subsection{Premonoids, quarks, and irreducibles.} A \evid{preorder} on a set $X$ is a reflexive and transitive binary relation $\preceq$ on $X$; for two elements $x,y \in X$, we write $x \prec y$ to mean that $x \preceq y$ and $y\npreceq x$, while we say that $x$ and $y$ are \evid{$\preceq$-equivalent} if $x\preceq y\preceq x$. A preorder $\preceq$ on $X$ is said to be \evid{Artinian} if there is no sequence $(x_k)_{k\ge 0}$ of elements of $X$ such that $x_{k+1} \prec x_{k}$ for every $k\in \mathbb{N}$. 

Following \cite[Definition 3.4]{Tr20(c)}, we call \evid{premonoid} a pair $\mathcal H := (H, \preceq)$, where $H$ is a monoid (the \evid{ground monoid} of $\mathcal{H}$) and $\preceq$ is a preorder on its underlying set. Note that we do not require any compatibility between the preorder $\preceq$ and the multiplication in $H$, and we say that $\mathcal{H}$ is an \evid{Artinian premonoid} if $\preceq$ is Artinian on $X$. The binary relation $\mid_H$ defined on a monoid $H$ by $x \mid_H y$ if and only if $x\in H$ and $y \in HxH$ is a preorder we refer to as the \evid{divisibility preorder} on $H$. Accordingly, we call \evid{divisibility premonoid} of $H$ the premonoid $H_{\rm div}:=(H,\mid_H)$. The divisibility preorder, and consequently divisibility premonoids, play a central role in the study of factorization. In particular, a divisibility premonoid $H_{\rm div}$ is Artinian if and only if it satisfies the ACCP, as defined above (see \cite[Remark 3.11(4)]{Tr20(c)}).

Given a premonoid $\mathcal H := (H, \preceq)$, an el\-e\-ment $u \in H$ is a \evid{$\preceq$-unit} of $H$ (or a \evid{unit} of $\mathcal H$) if it is $\preceq$-equivalent to $1_H$; otherwise, $u$ is a \evid{$\preceq$-non-unit} (or a \evid{non-unit} of $\mathcal H$). We write $\mathcal H^\times$ for the set of units of $\mathcal H$. A $\preceq$-non-unit $a \in \allowbreak H$ is called a \evid{$\preceq$-quark} of $H$ (or a \evid{quark} of $\mathcal H$) if there is no $\preceq$-non-unit $b$ with $b \prec a$; and is a \evid{$\preceq$-ir\-re\-duc\-i\-ble} \evid{of degree $s$} of $H$ (or an \evid{ir\-re\-duc\-i\-ble} \evid{of degree $s$} of $\mathcal H$) for a certain integer $s \ge 2$, if $a \ne x_1 \cdots x_k$ for every $k \in \llb 2, s \rrb$ and for all $\preceq$-non-units $x_1, \ldots, x_k \in H$ with $x_1 \prec a, \ldots, x_k \prec a$. We will simply refer to a $\preceq$-ir\-re\-duc\-i\-ble of degree $2$ as a \evid{$\preceq$-ir\-re\-duc\-i\-ble} of $H$ (or \evid{irreducible} of $\mathcal H$). In the course of the paper, with a slight abuse of notation, we will sometimes refer to the elements of a premonoid to denote the elements of its ground monoid.
The above no\-tions were introduced in \cite[Definition 3.6]{Tr20(c)} and \cite[Definition 3.1]{Co-Tr-21(a)}; in the next remark, we summarize some facts related to them.

\begin{remark}\label{rem:premonoids}

\begin{enumerate*}[label=\textup{(\arabic{*})}, mode=unboxed]
\item\label{rem:premonoids(0)} In the context of premonoids, it is possible to define additional classes of objects beyond those mentioned earlier, which are of limited interest for this note but that are important in other settings (see \cite[Definition 3.6]{Tr20(c)}). It is still worth highlighting that, in a premonoid $\mathcal{H}=(H,\preceq)$, a $\preceq$-non-unit $a\in H$ is called a \evid{$\preceq$-atom} of $H$ (or an \evid{atom} of $\mathcal{H}$) if $a\ne xy$ for all $\preceq$-non-units $x,y\in H$. Understanding the interrelation among irreducibles, atoms, and quarks of a premonoid $\mathcal H$ is often crucial for a deeper comprehension of various phenomena. It is clear from the definitions, for example, that both atoms and quarks are irreducibles. However, an irreducible may not necessarily be either an atom or a quark, and similarly, a quark may not be an atom. Importantly, this observation holds true even in the case of divisibility premonoids; refer to \cite[Remark 3.7(4), Proposition 4.11(iii), and Theorem 4.12]{Tr20(c)} for further details. Note, in addition, that every irreducible of degree $s$ in a premonoid $\mathcal{H}$ is an irreducible of degree $k$ for every $k\in\llb 2,s\rrb$, and that every quark is an irreducible of degree $s$ for every $s\ge 2$. Theorem~\ref{thm: strongly Artinian} gives sufficient conditions for the reverse implication of the latter statement to hold. 
\end{enumerate*}

\vskip 0.05cm

\begin{enumerate*}[label=\textup{(\arabic{*})}, mode=unboxed, resume]
\item\label{rem:premonoids(1)}  As mentioned in the introduction, classical factorization theory primarily studies factorizations into atoms in commutative and cancellative monoids. In more general terms, however, it revolves around investigating the divisibility premonoid of a \evid{Dedekind-finite} monoid $H$. A key observation in this regard is that, under the hypothesis of Dedekind-finiteness, a $\mid_H$-unit is a unit of $H$. Thus, in this case, a $\mid_H$-atom is an (ordinary) atom, i.e., a non-unit of $H$ that cannot be written as a product of two non-units, and a $\mid_H$-irreducible is an (ordinary) irreducible in the sense recalled at the beginning of the section (cf.~\cite[Remark 3.7(2)]{Tr20(c)}). More generally, if the monoid $H$ is \evid{acyclic}, i.e., $uxv \ne x$ for all $u, v, x \in H$ such that $u$ or $v$ is a non-unit, then we get from \cite[Corollary 4.4]{Tr20(c)} that $\mid_H$-ir\-re\-duc\-i\-bles, $\mid_H$-atoms, $\mid_H$-quarks, and atoms coincide.
Note that if $H$ is acyclic or cancellative, it is also \evid{unit-cancellative} as defined in \cite[Sect.~2.1, p.~256]{Fa-Tr18}. i.e., $xy \ne x \ne yx$ for all $x,y \in H$ with $y \notin H^\times$. However, the reverse may not hold true, as illustrated in Example 5.6 of \cite{Cos-Tri-2023(a)}. As mentioned in the introduction, much of the existing literature on factorization in non-cancellative monoids has primarily focused on the unit-cancellative scenario. In the commutative setting, acyclicity and unit-cancellativity are equivalent, but this is not true in the non-commutative case, as demonstrated in \cite[Example 4.8]{Tr20(c)}. We observe, to conclude, that since every unit-cancellative monoid is Dedekind-finite, so is every acyclic monoid.
\end{enumerate*}

\vskip 0.05cm

\begin{enumerate*}[label=\textup{(\arabic{*})}, mode=unboxed, resume]
\item\label{rem:premonoids(1bis)}There are examples of premonoids devoid of atoms. In the classical framework of (divisibility premonoids of) commutative and cancellative monoids, those with no atoms are referred to as \evid{antimatter}. For instance, the additive monoid of non-negative rationals is antimatter and contains no irreducibles. Consider instead the divisibility premonoid of the commutative monoid $(\mathcal{P}(X), \cup)$, where $\mathcal{P}(X)$ is the power set of the non-empty set $X$. Since every element of $\mathcal{P}(X)$ is idempotent, there are no atoms. On the other hand, every singleton in $\mathcal{P}(X)$ is irreducible.
\end{enumerate*}

\vskip 0.05cm

\begin{enumerate*}[label=\textup{(\arabic{*})}, mode=unboxed,resume]
\item\label{rem:premonoids(2)} Given a premonoid $\mathcal{H}=(H,\preceq)$, a pair $\mathcal K := (K, \preceq_K)$ is said to be a \evid{subpremonoid of $\mathcal{H}$} if $K$ is a submonoid of $H$ and $\preceq_K$ is the \evid{restriction} of $\preceq$ to $K$ (i.e., the preorder on $K$ defined by taking $x \preceq_K y$ if and only if $x, y \in K$ and $x \preceq y$). It follows from the definitions that $\mathcal{K}^\times = K \cap \mathcal{H}^\times$. In fact, since $K$ is a submonoid of $H$, we have $1_K = 1_H$ and hence $u\in K$ is a $\preceq_K$-unit if and only if $1_H\preceq u\preceq 1_H$. Note, in passing, that the restriction of the divisibility preorder $\mid_H$ on a monoid $H$ to a submonoid $K$ of $H$ does not generally coincide with the divisibility preorder on $K$. This occurs, for instance, when $K$ is a divisor-closed submonoid of $H$.
\end{enumerate*}
\end{remark}

\subsection{Existence factorization theorems} We say that a premonoid $\mathcal{H}=(H,\preceq)$ is \evid{$s$-factorable} for some integer $s\ge 2$ (respectively, \evid{factorable}) if every $\preceq$-non-unit of $H$ can be written as a (non-empty, finite) product of $\preceq$-irreducibles of degree $s$ (respectively, of $\preceq$-irreducibles). In light of Remark~\ref{rem:premonoids}\ref{rem:premonoids(0)}, every $s$-factorable premonoid is $k$-factorable for every $k\in \llb 2,s\rrb$ so, in particular, it is factorable. The relevance of the above definitions is tied to the next theorem, first proved in \cite[Theorem 3.10]{Tr20(c)} and subsequently refined in \cite[Theorem 2.7]{Co-Tr-21(a)}. This result provides sufficient conditions for a premonoid to be factorable, and it can be applied in diverse scenarios to establish the {\it existence} of particular factorizations. Its easy proof, that we do not include in this note, is based on a basic property of Artinian preorders: if $\preceq$ is an Artinian preorder on a set $X$, then every non-empty subset $S\subseteq X$ admits a \evid{$\preceq$-minimal} element, i.e., an element $x\in S$ such that $y\nprec x$ for every $y\in S$ \cite[Remark 3.9(3)]{Tr20(c)}.

\begin{theorem}\label{thm: FTF}
Every Artinian premonoid is $s$-factorable for every integer $s\ge 2$ (and hence factorable).
\end{theorem}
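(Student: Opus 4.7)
The plan is to proceed by contradiction, invoking the basic property of artinian preorders recalled immediately before the statement: every non-empty subset of $H$ admits a $\preceq$-minimal element. Fix an integer $s\ge 2$ and let $S\subseteq H$ be the set of $\preceq$-non-units that cannot be expressed as a non-empty finite product of $\preceq$-irreducibles of degree $s$. The goal is to show $S=\emptyset$.

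Suppose otherwise, and extract a $\preceq$-minimal element $a$ of $S$ using artinianity. I would first observe that $a$ is not itself a $\preceq$-irreducible of degree $s$; for otherwise the trivial one-term product $a=a$ would witness $a\notin S$, since $s$-factorizations are permitted to be single-term products. Unwinding the definition of $\preceq$-irreducible of degree $s$, the failure of that property forces the existence of some $k\in\llb 2,s\rrb$ and $\preceq$-non-units $x_1,\ldots,x_k\in H$ with $x_i\prec a$ for every $i$, such that $a=x_1\cdots x_k$. By the $\preceq$-minimality of $a$ in $S$, none of the $x_i$ can belong to $S$, hence each admits a factorization $x_i=y_{i,1}\cdots y_{i,m_i}$ into $\preceq$-irreducibles of degree $s$. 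Concatenating these expressions yields a factorization of $a$ into $\preceq$-irreducibles of degree $s$, contradicting $a\in S$. The parenthetical clause ``hence factorable'' then follows from Remark~\ref{rem:premonoids}\ref{rem:premonoids(0)}, since any $\preceq$-irreducible of degree $s$ is in particular a $\preceq$-irreducible (of degree $2$).

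There is essentially no serious obstacle in carrying out this plan: once artinianity supplies a $\preceq$-minimal counterexample, the argument reduces to pure definition-unwinding. The one point that deserves a moment of care is the passage from ``$a$ fails to be $\preceq$-irreducible of degree $s$'' to the existence of a witnessing decomposition $a=x_1\cdots x_k$ with each $x_i$ a $\preceq$-non-unit strictly below $a$; this is just the contrapositive of the definition, but it is precisely what allows the minimality of $a$ in $S$ to be deployed uniformly on every factor $x_i$ and thus to close the induction.
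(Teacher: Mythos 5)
Your proof is correct and matches the approach the paper alludes to but does not spell out: extract a $\preceq$-minimal counterexample using the stated property of artinian preorders, observe that it cannot itself be an irreducible of degree $s$, unwind the definition to split it into strictly smaller non-units, and concatenate their factorizations. No gaps; the one-term-product observation and the uniform application of minimality to every factor are exactly the right points to make explicit.
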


By imposing stricter conditions on the premonoid $\mathcal{H}=(H,\preceq)$, it is possible to refine Theorem~\ref{thm: FTF} in order to obtain an estimate of the minimum length of a factorization into $\preceq$-irreducibles of a fixed $\preceq$-non-unit $x\in H$, i.e., of the minimum number of $\preceq$-irreducibles of $H$ whose product is $x$. In order to do so, we need to introduce two new notions.

Given a premonoid $\mathcal{H}=(H,\preceq)$ and an element $x \in H$, we denote by $\hgt_\mathcal{H}(x)$ the supremum of the set of all integers $n \ge 1$ for which there are $\preceq$-non-units $x_1, \ldots, x_n \in H$ with $x_1 = x$ and $x_{k+1} \prec x_k$ for each $k \in \llb 1, n-1 \rrb$, where we set $\sup \emptyset := 0$. We call $\hgt_\mathcal{H}(x)$ the \evid{height} of $x$ (relative to $\mathcal{H}$) and write $\hgt(x)$ in place of $\hgt_\mathcal{H}(x)$ if no confusion can arise. We say that $\mathcal{H}$ is a \evid{strongly Artinian} premonoid if the height of every element of $H$ is finite.

\begin{remark}\label{rem:height}

\begin{enumerate*}[label=\textup{(\arabic{*})}, mode=unboxed]
\item\label{rem:height(0)} If $H$ is a finite monoid (i.e., $|H|<\infty$), then every premonoid $\mathcal{H}=(H,\preceq)$ is strongly Artinian: in fact, for $x,y\in H$, $x\prec y$ implies $x\ne y$.
\end{enumerate*} 

\vskip 0.05cm

\begin{enumerate*}[label=\textup{(\arabic{*})}, mode=unboxed, resume]
\item\label{rem:height(1)} It is clear from the definition that for an element $x$ in a premonoid $\mathcal{H}=(H,\preceq)$, $\hgt(x)=1$ if and only if $x$ is a quark and $\hgt(x)=0$ if and only if $x\in \mathcal{H}^\times$. Moreover, in light of Remark~\ref{rem:premonoids}\ref{rem:premonoids(2)}, if $\mathcal{K}=(K,\preceq_K)$ is a subpremonoid of $\mathcal{H}$, then the non-units of $\mathcal{K}$ are non-units of $\mathcal{H}$. It then follows that $\hgt_\mathcal{K}(y)\le \hgt_\mathcal{H}(y)$ for every element $y\in K$, and so every subpremonoid of a strongly Artinian premonoid is strongly Artinian (cf. \cite[Remark 3.3(1)]{Co-Tr-21(a)}).
\end{enumerate*} 

\vskip 0.05cm

\begin{enumerate*}[label=\textup{(\arabic{*})}, mode=unboxed,resume]
\item\label{rem:height(2)} Every strongly Artinian premonoid is Artinian. If $\mathcal{H}=(H,\preceq)$ is strongly Artinian, then the map $\lambda: H\to \mathbb N: x\mapsto \hgt(x)$ is well defined. Moreover, $\lambda(x)<\lambda(y)$ for every $x,y\in H$ such that $x\prec y$. Thus, the preorder $\preceq$ is Artinian on $H$, otherwise there would exist an infinite sequence $(n_k)_{k\ge 0}$ of non-negative integers with $n_{k+1}<n_k$ for every $k\in \mathbb N$, which is absurd (cf. \cite[Remark 3.9(1)]{Tr20(c)}). 
\end{enumerate*}

\vskip 0.05cm

\begin{enumerate*}[label=\textup{(\arabic{*})}, mode=unboxed,resume]
\item\label{rem:height(3)} Not every Artinian premonoid is strongly Artinian. For instance, let us consider the divisibility premonoid of the Puiseux monoid $\langle 1/p \,| \,p \text{ is prime}\rangle$, i.e., the additive submonoid of $\mathbb{Q}_{\ge 0}$ generated by the reciprocals of prime numbers. This premonoid is Artinian by \cite[Theorem 4.5]{CGG21}, but it is not strongly Artinian. In fact, for every prime $p$ and every $k\in \llb 1,p-1\rrb$, $k/p$ properly divides $(k+1)/p$ with the result that $\hgt(1)\ge p-1$. Thus, $\hgt(1)=\infty$.
\end{enumerate*}
\end{remark}

We are ready to state the second main theorem, in a form slightly less general than that of \cite[Theorem 3.5]{Co-Tr-21(a)} but suitable for the purposes of this note. We include the proof for completeness.

\begin{theorem}\label{thm: strongly Artinian}
    Let $\mathcal{H}=(H, \preceq)$ be a strongly Artinian premonoid and let $s\ge 2$ be an integer such that, for every $x\in H$ that is neither a $\preceq$-unit nor a $\preceq$-quark there exist $\preceq$-non-units $y_1,\dots, y_k\in H$, with $k\in\llb 2,s\rrb$, such that $y_i\prec x$ for each $i\in\llb 1,k\rrb$, $x=y_1\cdots y_k$, and $\hgt(y_1)+\cdots+\hgt(y_k)\le \hgt(x)+k-2$. Then every $\preceq$-non-unit $y\in H$ is a product of at most $(s-1)\hgt(x)-(s-2)$ $\preceq$-quarks.
\end{theorem}
\begin{proof}
Every strongly Artinian premonoid is Artinian by Remark~\ref{rem:height}\ref{rem:height(2)}. Moreover, we get directly from the hypotheses that every $\preceq$-irreducible of degree $s$ of $H$ is a $\preceq$-quark. It then follows from Theorem~\ref{thm: FTF} that every $\preceq$-non-unit $y\in H$ is a (non-empty, finite) product of $\preceq$-quarks. It remains to prove the bound on the minimal number of $\preceq$-quarks that realize such decomposition. Let us denote by $q(y)$ the minimum length of a fac\-tor\-i\-za\-tion of a $\preceq$-non-unit $y \in H$ into $\preceq$-quarks. Note that $q(y)$ is a well-defined positive integer by the $s$-factorability of $\mathcal{H}$, and that $1 \le \hgt(y) < \infty$. Fix a $\preceq$-non-unit $x \in H$ and set $N := \hgt(x)$. We claim that $q(x) \le (s-1)N - (s-2)$. If $N = 1$, then $x$ is a $\preceq$-quark by Remark~\ref{rem:height}\ref{rem:height(1)} and the conclusion is trivial. So, let $N \ge 2$ and assume inductively that $q(y) \le (s-1)\hgt(y)-(s-2)$ for every $\preceq$-non-unit $y \in H$ with $\hgt(y)\le N-1$. Since $x$ is neither a $\preceq$-unit nor a $\preceq$-quark, there exist $k\in\llb 2,s\rrb$ $\preceq$-non-units $y_1,\dots, y_k\in H$ such that $y_i\prec x$ for each $i\in\llb 1,k\rrb$, $x=y_1\cdots y_k$, and $\hgt(y_1)+\cdots+\hgt(y_k)\le \hgt(x)+k-2$. Then $1 \le \hgt(y_i) \le N-1$ for every $i \in \llb 1,k \rrb$, and we get by the inductive hypothesis that $y_i$ factors as a product of $(s-1) \hgt(y_i) - (s-2)$ or fewer $\preceq$-quarks. Therefore
\[q(x) \leq \sum_{i=1}^k q(y_i) \le \sum_{i=1}^k \bigl((s-1)\hgt(y_i) -(s-2)\bigr) = (s-1)\sum_{i=1}^k \hgt(y_i) - k(s-2),\]
which, using that $k \leq s$, yields $q(x) \le (s-1)(N+k-2)-k(s-2) \le (s-1)N-(s-2)$, as wished. 
\end{proof}

While the artinianity of a premonoid is a sufficient condition for its factorability, the same condition is far from being necessary. For example, if $H$ is the multiplicative monoid of the non-zero elements of the integral domain constructed by A.~Grams in \cite[Sect.~1]{Gra74}, then the divisibility premonoid of $H$ is factorable despite it is not Artinian (see \cite[Sect.~3]{Tr20(c)} for further details). However, the following result shows that Theorem~\ref{thm: FTF} is, in a sense, ``best possible''.

\begin{proposition}[{\cite[Proposition 2.8]{Cos-Tri-2023(a)}}] Let $A$ and $S$ be subsets of a monoid $H$ such that $1_H\notin A\cup S$. Then the following conditions are equivalent:
\begin{enumerate}[label=\textup{(\alph{*})}, mode=unboxed]
    \item Every element of $S$ factors as a non-empty product of elements of $A$.
    
\vskip 0.05cm

    \item There exists an Artinian preorder $\preceq$ on $H$ such that every element of $S$ is a $\preceq$-non-unit of $H$ and every $\preceq$-irreducible of $H$ is an element of $A$.
\end{enumerate}
\end{proposition}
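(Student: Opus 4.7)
The plan is to handle the two implications separately, with (b) $\Rightarrow$ (a) being essentially a direct application of Theorem \ref{thm: FTF}, and (a) $\Rightarrow$ (b) requiring an explicit construction of the preorder via a suitable ``height function''.

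For (b) $\Rightarrow$ (a), given an artinian preorder $\preceq$ with the stated properties, Theorem \ref{thm: FTF} tells us that the premonoid $(H, \preceq)$ is factorable. Any $x \in S$ is by hypothesis a $\preceq$-non-unit, hence decomposes as a non-empty finite product of $\preceq$-irreducibles of $H$; but all these irreducibles lie in $A$, so we obtain the desired factorization.

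For (a) $\Rightarrow$ (b), I would define a function $\lambda \colon H \to \mathbb{N}$ by setting $\lambda(1_H) := 0$, letting $\lambda(x)$ be the minimum length of a factorization of $x$ as a non-empty finite product of elements of $A$ whenever such a factorization exists (and $x \ne 1_H$), and setting $\lambda(x) := 0$ otherwise. The preorder is then declared by $y \preceq x$ if and only if $\lambda(y) \le \lambda(x)$. Reflexivity and transitivity are inherited from $\le$ on $\mathbb{N}$, and artinianity follows because any strictly descending $\prec$-chain would produce a strictly decreasing sequence in $\mathbb{N}$, contradicting well-ordering. Note also that the $\preceq$-units are exactly the elements with $\lambda = 0$, while by assumption every $x \in S$ admits a factorization into elements of $A$ and hence satisfies $\lambda(x) \ge 1$, so $x$ is a $\preceq$-non-unit.

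It remains to verify that every $\preceq$-irreducible of $H$ lies in $A$. If $x$ is $\preceq$-irreducible then $\lambda(x) \ge 1$, so $x = a_1 \cdots a_n$ for some $a_1, \ldots, a_n \in A$ with $n = \lambda(x)$; if $n = 1$ then $x = a_1 \in A$ as wanted. Otherwise, set $y := a_1$ and $z := a_2 \cdots a_n$; then $\lambda(y) = 1 < n$ and $\lambda(z) \le n - 1 < n$. The main subtlety, and the only genuinely delicate step, is ensuring $z$ is a $\preceq$-non-unit, i.e., $\lambda(z) \ge 1$: this holds because $z = a_2 \cdots a_n$ is itself a factorization of $z$ into elements of $A$ (so $\lambda(z) \ne 0$ unless $z = 1_H$), and $z = 1_H$ would yield $x = a_1$, contradicting the minimality of $n$. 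Thus $y, z$ are $\preceq$-non-units with $y \prec x$ and $z \prec x$, contradicting the $\preceq$-irreducibility of $x$. Hence $n = 1$ and $x \in A$, completing the construction.
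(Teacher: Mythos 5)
Your proof is correct. Both directions check out: (b)~$\Rightarrow$~(a) is indeed an immediate consequence of Theorem~\ref{thm: FTF}, and for (a)~$\Rightarrow$~(b) your ``minimal $A$-factorization length'' function $\lambda$ yields a total artinian preorder whose units are exactly the elements with $\lambda = 0$, and your handling of the subtlety $z \neq 1_H$ (via minimality of $n$) is exactly what is needed to conclude that $y, z$ are genuine $\preceq$-non-units with $y, z \prec x$. Note that the present paper does not include a proof of this proposition — it only cites \cite[Proposition~2.8]{Cos-Tri-2023(a)} — so there is no in-text argument to compare against; your length-function construction is, however, the natural route and is in the spirit of the cited reference's treatment.
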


Thus, in a monoid $H$, proving that every element of a given subset $S\subseteq H$ factors through the elements of another subset $A\subseteq H$ is equivalent to the artinianity of a suitable premonoid $(H,\preceq)$.

 In \cite[Corollary 2.5]{Tri23(a)}, Tringali gave a characterization of factorable premonoids that is based on a ``local'' version of Theorem~\ref{thm: FTF}. As a consequence \cite[Corollary 2.6]{Tri23(a)}, every non-unit of an acyclic monoid $H$ is a product of atoms (i.e., $H$ is atomic) if and only if $H$ admits a set of generators satisfying the ACCP. Recall that an element $x$ in a monoid $H$ satisfies the ACCP if there is no sequence $(x_k)_{k\ge 0}$ in $H$ with $x_0 = x$ and $Hx_kH\subsetneq Hx_{k+1}H$ for each $k\in \mathbb N$.

\section{A review of the known applications}\label{sec: examples}

In this section, we review a series of applications of the abstract factorization theorems \ref{thm: FTF} and \ref{thm: strongly Artinian}, all of which, except that in subsection \ref{Lasker-Noether}, have already been discussed in previous papers \cite{Tr20(c)} and \cite{Co-Tr-21(a)}. We direct the reader to these works for more details.

\subsection{Classical theorems}\label{sub: classical} A well-known result, stated in \cite[Proposition 0.9.3]{Cohn06}, establishes that a cancellative monoid satisfying ACCPR and ACCPL is atomic. By applying Theorem~\ref{thm: FTF} to a special class of divisibility premonoids, one can extend this classical factorization theorem to the unit-cancellative case:

\begin{corollary}
    Let $H$ be a unit-cancellative monoid satisfying the ACCPL and the ACCPR. Then every non-unit of $H$ is a product of atoms.
\end{corollary}
\begin{proof} By \cite[Corollary 4.6]{Tr20(c)}, $H$ satisfies the ACCP and, by Theorem~\ref{thm: FTF} (applied to $H_{\rm div}$) and Remark~\ref{rem:premonoids}\ref{rem:premonoids(1)}, $H$ is atomic (see also \cite[Theorem 2.28(i)]{Fa-Tr18}). 
\end{proof}

Anderson and Valdes-Leon established in \cite[Theorem 3.2]{And-ValLeo-1996} a generalization of the aforementioned results in a different direction. Working in the context of commutative rings with zero-divisors, they essentially showed that every non-unit of a {\it commutative} monoid satisfying the ACCP, even one that is possibly ``highly non-cancellative'', can be factored into a finite product of irreducibles. This result can be immediately generalized to the non-commutative case by applying Theorem~\ref{thm: FTF} to the divisibility premonoid of a Dedekind-finite monoid.

\begin{corollary}[{\cite[Corollary 4.1]{Tr20(c)}}]
    Let $H$ be a Dedekind-finite monoid satisfying the ACCP. Then every non-unit of $H$ is a product of irreducibles.
\end{corollary}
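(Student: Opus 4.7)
The plan is to apply Theorem \ref{thm: FTF} to the divisibility premonoid $H_{\rm div}=(H,\mid_H)$. First, I would observe that the hypothesis that $H$ satisfies the ACCP is, by the equivalence recalled in Section \ref{sec: survey} (see \cite[Remark 3.11(4)]{Tr20(c)}), exactly the condition that $H_{\rm div}$ is an artinian premonoid. Theorem \ref{thm: FTF} then guarantees that $H_{\rm div}$ is factorable, so every $\mid_H$-non-unit of $H$ can be written as a non-empty finite product of $\mid_H$-irreducibles.

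The remaining task is to translate the premonoid-theoretic notions of unit and irreducible into the ordinary (monoid-theoretic) ones used in the statement. For this I would invoke Remark \ref{rem:premonoids}\ref{rem:premonoids(1)}: when $H$ is Dedekind-finite, a $\mid_H$-unit is precisely a unit of $H$, so the $\mid_H$-non-units are exactly the ordinary non-units, and a $\mid_H$-irreducible is precisely an ordinary irreducible. Combining this identification with the factorability given by Theorem \ref{thm: FTF} yields that every non-unit of $H$ is a product of irreducibles, which is the conclusion.

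The argument is therefore essentially a bookkeeping exercise matching up definitions, and I do not expect any real obstacle: all the heavy lifting has been done once (ACCP $\Leftrightarrow$ artinianity of $H_{\rm div}$, and Dedekind-finiteness $\Rightarrow$ coincidence of $\mid_H$-units/irreducibles with ordinary units/irreducibles). If anything, the only point where one should be careful is to notice that Dedekind-finiteness is used in two places at once: to identify the units and, consequently, to identify the irreducibles (since the latter are defined in terms of non-units). Once these identifications are made, the corollary is an immediate specialization of the general existence result.
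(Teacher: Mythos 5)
Your proposal is correct and follows essentially the same route as the paper's own proof: apply Theorem \ref{thm: FTF} to the divisibility premonoid $H_{\rm div}$, use the equivalence between the ACCP and the artinianity of $H_{\rm div}$, and invoke Remark \ref{rem:premonoids}\ref{rem:premonoids(1)} to identify $\mid_H$-units and $\mid_H$-irreducibles with the ordinary ones under Dedekind-finiteness. There is no gap and nothing to add.
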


\subsection{Direct-sum decompositions of modules} In \cite[Corollary 4.13]{Tr20(c)}, Tringali proves out of Theorem~\ref{thm: FTF} an ``object decomposition result'' for special categories admitting finite products. This yields as a special case the following Corollary~\ref{corollary cat}, for which we present an alternative direct proof. We first recall some basic definitions from \cite[Chapter 6]{Lam99}. Given a ring $R$, we let the \evid{uniform dimension} ${\rm dim}_R(M)$ of a (left) $R$-module $M$ be the supremum of the positive integers $n$ for which there exist $n$ non-zero $R$ submodules $N_1, \ldots, N_n$ of $M$, such that $N_1\oplus_R \cdots\oplus_R N_n$ embeds into $M$. Here $\oplus_R$ denotes the direct sum of $R$-modules and we take $\sup\emptyset:=0$. It is well known (see, e.g., \cite[Corollary 6.10]{Lam01}) that the uniform dimension is {\it additive}, meaning that
\begin{equation}\label{eq: uniform dimension}
    {\rm dim}_R(M\oplus_R N)={\rm dim}_R(M)+{\rm dim}_R(N)\text{ for all }R\text{-modules }M, N.
\end{equation}

\begin{corollary}[{\cite[Corollary 4.14]{Tr20(c)}}]\label{corollary cat}
   Every $R$-module of finite uniform dimension over a ring $R$ is equal to a direct sum of finitely many indecomposable $R$-modules.
\end{corollary}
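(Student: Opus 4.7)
The plan is to derive this corollary from Theorem \ref{thm: FTF} by setting up an appropriate premonoid built from modules. Let $H$ denote the commutative monoid whose elements are the isomorphism classes $[M]$ of $R$-modules of finite uniform dimension, with operation $[M] \cdot [N] := [M \oplus_R N]$ and identity $1_H := [0]$; this is well-defined by the additivity formula \eqref{eq: uniform dimension}. I would equip $H$ with the preorder $[N] \preceq [M]$ if and only if ${\rm dim}_R(N) \le {\rm dim}_R(M)$, and set $\mathcal{H} := (H, \preceq)$. The standard set-theoretic caveat on isomorphism classes can be absorbed by restricting to a Grothendieck universe.

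Next, I would carry out three routine checks. First, since ${\rm dim}_R(N) = 0$ if and only if $N = 0$, one has $\mathcal{H}^\times = \{[0]\} = \{1_H\}$. Second, the $\preceq$-irreducibles of $\mathcal{H}$ are precisely the classes of the indecomposable $R$-modules of positive uniform dimension: if $M$ is indecomposable and $[M] = [N_1] \cdot [N_2]$, then $M \cong N_1 \oplus_R N_2$ forces some $N_i$ to be zero and thus $[N_i] \in \mathcal{H}^\times$; conversely, if $M = N_1 \oplus_R N_2$ with both $N_i$ non-zero, then \eqref{eq: uniform dimension} together with ${\rm dim}_R(N_i) \ge 1$ gives ${\rm dim}_R(N_i) < {\rm dim}_R(M)$, so $[N_i] \prec [M]$ for $i = 1, 2$, contradicting $\preceq$-irreducibility. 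Third, $\mathcal{H}$ is artinian (in fact strongly artinian, with $\hgt_\mathcal{H}([M]) \le {\rm dim}_R(M)$), because a strict descending chain $[M_0] \succ [M_1] \succ \cdots$ in $\mathcal{H}$ would produce a strictly decreasing sequence ${\rm dim}_R(M_0) > {\rm dim}_R(M_1) > \cdots$ of non-negative integers, which is impossible.

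With these checks in hand, Theorem \ref{thm: FTF} applied to $\mathcal{H}$ yields that every $\preceq$-non-unit of $H$ is a finite product of $\preceq$-irreducibles, which unpacks to the statement that every non-zero $R$-module of finite uniform dimension is isomorphic to a direct sum of finitely many indecomposable $R$-modules (the zero module being the empty direct sum). The main conceptual step is the identification of $\preceq$-irreducibles with indecomposables, and it rests squarely on the additivity of ${\rm dim}_R$ recorded in \eqref{eq: uniform dimension}; the only genuine obstacle I foresee is the standard foundational nuisance of working with isomorphism classes of modules rather than with an honest set.
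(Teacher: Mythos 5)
Your proof is correct and follows essentially the same route as the paper: both construct the monoid of isomorphism classes of $R$-modules of finite uniform dimension under $\oplus_R$, equip it with the preorder induced by ${\rm dim}_R$, use additivity \eqref{eq: uniform dimension} to identify $\preceq$-irreducibles (in fact $\preceq$-quarks) with classes of indecomposables, verify artinianity, and invoke Theorem \ref{thm: FTF}. The only cosmetic differences are that you flag the set-theoretic issue about proper classes (which the paper leaves implicit) and conclude with ``isomorphic to'' rather than explicitly noting, as the paper does, that one may replace the summands by isomorphic submodules so that the direct sum decomposition holds as an equality.
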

\begin{proof}
Given a ring $R$, let $H$ be the quotient of the set of $R$-modules with finite uniform dimension by the equivalence relation that identifies two isomorphic modules. We denote by $[M]$ the equivalence class of an $R$-module $M$, and observe that if $M$ and $N$ are two isomorphic modules, then ${\rm dim}_R(M)={\rm dim}_R(N)$. The set $H$ is then a monoid with respect to the operation $[M]\oplus[N]:=[M\oplus_R N]$, whose identity is the class $[0]$ of the zero $R$-module. For every $[M],[N]\in H$, set 
\[[M]\preceq [N]\text{ if and only if }{\rm dim}_R(M)\le{\rm dim}_R(N).\]
Clearly, $\preceq$ is an Artinian preorder on $H$, $[0]$ is the unique $\preceq$-unit of $H$, and, by \eqref{eq: uniform dimension}, the $\preceq$-irreducibles of $H$ (that are also $\preceq$-quarks) are exactly the classes of indecomposable modules, i.e., non-zero $R$-modules that are not the direct sum of two non-zero $R$-modules. Thus, we conclude from Theorem~\ref{thm: FTF} applied to the premonoid $(H,\preceq)$, that every $R$-module is isomorphic and hence equal to a direct sum of finitely many indecomposable $R$-modules. 
\end{proof}

Note that Corollary~\ref{corollary cat} applies, in particular, to every Artinian or Noetherian $R$-module, for which the direct sum decomposition into indecomposable $R$-modules is well known.

\subsection{The Lasker-Noether theorem}\label{Lasker-Noether} Let $R$ be a commutative ring and let $H:=\mathcal{I}(R)$ be the set of all ideals of $R$. Clearly, $H$ is a monoid with respect to the restriction $\cap_H$ to $H$ of the intersection on the power set $\mathcal{P}(R)$ of $R$. For for two ideals $I,J$ of $R$, define
\[I\preceq J \,\text{ if and only if }\, I\cap_H X=J \text{ for some }X\in H \,\text{ if and only if } \,J\subseteq I.\]
The relation $\preceq$ is clearly an order (i.e., a preorder with the antisymmetric property) on $H$, and we can consider the premonoid $\mathcal{H}=(H,\preceq)$. The unique unit of $\mathcal{H}$ is exactly $1_H=R$, a proper ideal $I$ of $R$ is a quark if and only if it is a maximal ideal, and $I$ is an irreducible if and only if it cannot be written as an intersection of two ideals properly containing $I$ (i.e., if and only if $I$ is an {\it irreducible ideal}). It is now easy to see the Lasker-Noether theorem, recalled in the introduction, as a corollary of Theorem~\ref{thm: FTF}.

\begin{corollary}[Lasker-Noether theorem]
    In a Noetherian commutative ring, every proper ideal can be decomposed as an intersection of finitely many primary ideals.
\end{corollary}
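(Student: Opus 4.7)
My plan is to combine Theorem \ref{thm: FTF}, applied to the premonoid $\mathcal{H}=(\mathcal{I}(R),\preceq)$ set up just above the corollary, with the classical implication that in a noetherian commutative ring every irreducible ideal is primary.

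First I would observe that the noetherian hypothesis on $R$ makes $\mathcal{H}$ artinian: any infinite strictly $\prec$-monotone sequence in $\mathcal{H}$ would unfold, via the translation $I\preceq J \Leftrightarrow J\subseteq I$, to an infinite strictly monotone chain of ideals of $R$, which is incompatible with the ACC. Theorem \ref{thm: FTF} then yields that every $\preceq$-non-unit of $\mathcal{H}$, i.e., every proper ideal of $R$, can be written as a finite product in $\mathcal{H}$, namely as a finite intersection, of $\preceq$-irreducibles; by the discussion preceding the corollary the $\preceq$-irreducibles of $\mathcal{H}$ are precisely the irreducible ideals of $R$.

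The remaining step, which is also the main obstacle (though a classical one), is to show that each irreducible ideal in a noetherian ring is already primary. The plan is the textbook argument: given an irreducible ideal $I$ with $xy\in I$ and $y\notin I$, the ascending chain $(I:x)\subseteq (I:x^2)\subseteq\cdots$ stabilizes at some index $n$ by noetherianity, and one then verifies the identity $I=(I+(x^n))\cap(I+(y))$ by writing a typical element of the right-hand side as $a+rx^n=b+sy$ with $a,b\in I$ and deducing $rx^{n+1}\in I$, hence $r\in (I:x^{n+1})=(I:x^n)$ and therefore $rx^n\in I$. Irreducibility of $I$ then forces one of the two factors to equal $I$, and since $y\notin I$ this must be $I+(x^n)$, yielding $x^n\in I$. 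Feeding this back into the decomposition from the previous paragraph concludes the proof.
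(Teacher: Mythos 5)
Your proposal follows exactly the same route as the paper: translate the noetherian hypothesis into artinianity of $\mathcal{H}$, invoke Theorem \ref{thm: FTF} to decompose proper ideals into $\preceq$-irreducibles (which were already identified with irreducible ideals in the setup), and then appeal to the classical fact that irreducible ideals in a noetherian commutative ring are primary. The only difference is that you spell out the textbook proof of that last classical fact, whereas the paper simply cites it; the argument is correct.
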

\begin{proof}
    In the notation above, the premonoid $\mathcal{H}$ is Artinian if and only if $R$ satisfies the ascending chain condition on its ideals, namely, if and only if $R$ is Noetherian. The claim follows then from Theorem~\ref{thm: FTF}, once recalled that every proper irreducible ideal of a Noetherian commutative ring is primary. 
\end{proof}

\medskip

Analogously, let $r$ be a weak ideal system on a commutative monoid $M$, as defined in \cite[Chapter 2]{Hal-Ko98}. The set $H_r:=\mathcal{I}_r(M)$ of $r$-ideals of $M$ is a monoid with respect to the restriction $\cap_{H_r}$ to $H_r$ of the intersection on $\mathcal{P}(M)$ (see \cite[(viii) on p.~16]{Hal-Ko98}). Moreover, the restriction $\subseteq_{H_r}$ to $H_r$ of the inclusion on $\mathcal{P}(M)$ is an order on $H_r$. Therefore, the pair $\mathcal{H}_r:=(H_r, \subseteq_{H_r})$ is a premonoid, and, whenever it is Artinian, every proper $r$-ideal of $M$ is an intersection of finitely many {\it irreducible} $r$-ideals, i.e. $r$-ideals that are not intersection of strictly bigger $r$-ideals. Note, in fact, that $M$ is the unique unit of $\mathcal{H}_r$. Moreover, a proper $r$-ideal of $M$ is a quark of $\mathcal{H}_r$ if and only if it is a maximal $r$-ideal. It is easily seen that when $M$ is $r$-Noetherian, then the premonoid $\mathcal{H}_r$ is Artinian. Thus, we can recover from Theorem~\ref{thm: FTF} the following:

\begin{corollary}[{\cite[Proposition on page 74]{Hal-Ko98}}] 
   If $M$ is an $r$-Noetherian monoid, then every proper $r$-ideal of $M$ can be written as the intersection of a finite number of irreducible $r$-ideals.
\end{corollary}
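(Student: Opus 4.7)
The plan is to derive this corollary as an immediate specialization of Theorem \ref{thm: FTF} to the premonoid $\mathcal{H}_r$ introduced in the paragraph preceding the statement. The three ingredients I need to put in place are: the artinianity of $\mathcal{H}_r$, the identification of the $\preceq$-irreducibles of $\mathcal{H}_r$ with the classical irreducible $r$-ideals, and a clean invocation of Theorem \ref{thm: FTF}.

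For the artinianity, I would read the preorder on $H_r$ consistently with the Lasker--Noether setup, i.e.\ as reverse inclusion ($I \preceq J$ when $J \subseteq I$). Under this convention, any infinite sequence $(I_k)_{k \ge 0}$ in $H_r$ with $I_k \prec I_{k+1}$ for every $k$ corresponds precisely to an infinite strictly ascending chain of $r$-ideals of $M$, which is excluded by the $r$-noetherianity hypothesis; hence $\mathcal{H}_r$ is artinian. For the identification of the relevant objects, I would observe that $M$ is the identity of $\cap_{H_r}$ and is therefore the unique $\preceq$-unit of $\mathcal{H}_r$, so the $\preceq$-non-units are exactly the proper $r$-ideals of $M$. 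Unpacking the definition, a proper $r$-ideal $I$ is a $\preceq$-irreducible of $\mathcal{H}_r$ precisely when it cannot be written as $I = J_1 \cap J_2$ with $J_1, J_2$ proper $r$-ideals strictly containing $I$, which is the classical notion of an irreducible $r$-ideal.

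With these two points in place, Theorem \ref{thm: FTF} yields that every $\preceq$-non-unit of $\mathcal{H}_r$, i.e.\ every proper $r$-ideal of $M$, is a non-empty finite product in $(H_r, \cap_{H_r})$ of $\preceq$-irreducibles, which is precisely a finite intersection of irreducible $r$-ideals of $M$. The one mildly delicate point, and the main obstacle to writing out a fully precise proof, is fixing the correct direction of the preorder on $H_r$ so that the definitions translate as intended (quarks $\leftrightarrow$ maximal $r$-ideals, degree-$2$ irreducibles $\leftrightarrow$ irreducible $r$-ideals); once this bookkeeping is sorted out, the entire argument is a direct translation of Theorem \ref{thm: FTF} and requires no further ideas.
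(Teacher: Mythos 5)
Your proposal is correct and follows essentially the same route as the paper, which simply applies Theorem~\ref{thm: FTF} to the premonoid $\mathcal{H}_r=(H_r,\cap_{H_r},\preceq)$ after identifying the unique $\preceq$-unit with $M$, the $\preceq$-irreducibles with the irreducible $r$-ideals, and artinianity of $\preceq$ with $r$-noetherianity of $M$. You were also right to flag the direction of the preorder as the delicate point: the paper's notation $\subseteq_{H_r}$ must be read, as in the preceding Lasker--Noether discussion, as $I\preceq J \Leftrightarrow J\subseteq I$ (otherwise quarks would be \emph{minimal} rather than maximal $r$-ideals and every proper $r$-ideal would be vacuously $\preceq$-irreducible). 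One small slip: under this reverse-inclusion convention an infinite chain $I_0\prec I_1\prec\cdots$ is a strictly \emph{descending} chain of $r$-ideals, while the artinianity that Theorem~\ref{thm: FTF} actually needs (matching the paper's use of it as the abstraction of ACCP, the existence of $\preceq$-minimal elements, and the definition of height) forbids infinite chains with $I_{k+1}\prec I_k$, which under reverse inclusion are precisely the strictly ascending chains excluded by $r$-noetherianity; your conclusion is therefore correct, but the displayed subscripts should be swapped.
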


\subsection{Power monoids} Other applications of Theorems \ref{thm: FTF} and \ref{thm: strongly Artinian} include an improvement \cite[Proposition 4.11 and Theorem 4.12]{Tr20(c)} of a characterization due to Antoniou and Tringali \cite[Theorem 3.9]{An-Tr18} of the atomicity of reduced power monoids, a special class of monoids of sets that is ``highly non-cancellative''. In more detail, let $M$ be a multiplicative monoid. Equipped with the operation of setwise multiplication
$$
(X, Y) \mapsto XY := \{xy : x \in X, \, y \in Y\},
$$
the family of all non-empty finite subsets of $M$ containing the identity $1_M$ is itself a monoid, herein denoted by $\mathcal P_{\fin,1}(M)$ and called the \evid{reduced} \evid{power monoid} of $M$. The next proposition is a simplified version of \cite[Proposition 4.11]{Tr20(c)}.

\begin{proposition}
    Let $M$ be a monoid and let $H$ be the reduced power monoid $\mathcal P_{\fin,1}(M)$ of $M$. Then, the following hold:
    
\vspace{0.05cm}

    \begin{enumerate*}[label=\textup{(\roman{*})}, mode=unboxed]
        \item\label{i} $H$ is a Dedekind-finite monoid and $H^\times=\{1_M\}$.
    \end{enumerate*}

\vspace{0.05cm}

     \begin{enumerate*}[label=\textup{(\roman{*})}, mode=unboxed, resume]
        \item\label{ii} The divisibility premonoid $H_{\rm div}$ is Artinian and every $X\in H$ factors  as a product of irreducibles.
    \end{enumerate*}
\end{proposition}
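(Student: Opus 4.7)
The plan is to handle (i) and (ii) separately, in both cases leveraging the single structural fact that every $X \in H$ contains $1_M$ by the definition of $\mathcal{P}_{\fin,1}(M)$. The identity of $H$ is $1_H = \{1_M\}$, and so for any $X, Y \in H$ with $X Y = \{1_M\}$, the chain of inclusions
\[
Y \;=\; \{1_M\} \cdot Y \;\subseteq\; X \cdot Y \;=\; \{1_M\}
\]
forces $Y = \{1_M\}$; symmetrically $X = \{1_M\}$. This single observation yields both parts of (i) at once: the group of units reduces to $\{\{1_M\}\}$, and whenever a product $X Y$ lies in $H^\times$, both factors $X$ and $Y$ are forced to equal $\{1_M\}$, which is Dedekind-finiteness.

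For (ii) the plan is to first establish the artinianity of $H_{\rm div}$, after which the factorability of every element of $H$ into irreducibles follows directly from Theorem \ref{thm: FTF}, since by Remark \ref{rem:premonoids}\ref{rem:premonoids(1)} combined with the Dedekind-finiteness from (i), the $\mid_H$-units and $\mid_H$-irreducibles of $H$ coincide with the classical units and irreducibles. To prove artinianity, I would rule out infinite strictly ascending chains of principal two-sided ideals $H X_0 H \subsetneq H X_1 H \subsetneq \cdots$ in $H$ (equivalent to the artinianity of $H_{\rm div}$ by the discussion in Section \ref{sec: survey}). An inclusion $H X_0 H \subseteq H X_1 H$ places $X_0 \in H X_1 H$, so $X_0 = A X_1 B$ for some $A, B \in H$; since $1_M \in A \cap B$, the containment $X_1 = \{1_M\} X_1 \{1_M\} \subseteq A X_1 B = X_0$ follows, giving $|X_1| \le |X_0|$. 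If the inclusion of ideals is strict, then $X_0 \ne X_1$ (else their principal ideals would coincide), whence $X_1 \subsetneq X_0$ and $|X_1| < |X_0|$. An infinite ascending chain of principal ideals would therefore produce an infinite strictly descending chain of positive integers, which is absurd.

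The main obstacle is no more than careful bookkeeping on directions, ensuring that the strict divisibility-type relations translate into strict cardinality inequalities in the correct direction; there is no deep step. The decisive structural input throughout is that $1_M$ lies in every element of $H$, which rigidly ties the monoid operation on $H$ to the set-theoretic inclusion on the underlying subsets of $M$ and makes the finite cardinality $|X|$ available as a well-founded height function.
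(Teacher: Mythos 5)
Your proof is correct and takes essentially the same route as the paper's: both rest on the single observation that, because $1_M$ belongs to every element of $H$, divisibility in $H$ forces set-theoretic containment, which at once trivializes the unit group, yields Dedekind-finiteness, and (via the finiteness of cardinalities) gives the chain condition needed to invoke Theorem \ref{thm: FTF} together with Remark \ref{rem:premonoids}\ref{rem:premonoids(1)}. The only cosmetic difference is that the paper records the implication ``$X \mid_H Y$ implies $X \subseteq Y$'' once up front and reuses it for both parts, whereas you unfold the same inclusion argument inline in (i) and again in (ii).
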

\begin{proof}
    If $X$ divides $Y$ in $H$, then $X \subseteq Y$. In fact, if $Y=UXV$ for some $U,V\in H$, then $X=\{1_M\}X\{1_M\}\subseteq UXV=Y$. Therefore, $XY = \{1_M\}$ implies that $X \cup Y \subseteq \{1_M\}$ and hence $X = Y = \{1_M\}$. This proves item \ref{i}. Moreover, a sequence $(X_k)_{k\ge 0}$ of elements of $H$ that is non-increasing with respect to the divisibility preorder, is also non-increasing with respect to inclusion. Since the elements of $H$ are finite sets, this can only happen if $X_{k+1}=X_k$ for all large $k\in \mathbb{N}$. It turns out that $H_{\rm div}$ is Artinian, so the rest of the claim follows from Theorem~\ref{thm: FTF} and Remark~\ref{rem:premonoids}\ref{rem:premonoids(1)}, which ensures that $\mid_H$-irreducibles of the Dedekind-finite monoid $H$ are ordinary irreducibles. 
\end{proof}

\subsection{Idempotent factorization of matrices} In the paper \cite{Co-Tr-21(a)}, Cossu and Tringali analyze a preorder quite different from those in the previous examples, referred to as the $\rfix$-preorder. They consider this preorder within the multiplicative monoid of broad classes of right Rickart rings, i.e., rings in which the right annihilator of any element is generated by an idempotent. In particular, as a consequence of this analysis, they obtain from Theorem~\ref{thm: strongly Artinian} quantitative refinements \cite[Proposition 5.9 and Theorem 5.19]{Co-Tr-21(a)} of some classical results by Erdos \cite{Er68}, Dawlings \cite{Da81}, Laffey \cite[Theorem 1]{Laf83}, and Fountain \cite[Theorem 4.6]{Fo91} on {\it idempotent factorization} in the multiplicative monoid of the ring of $n$-by-$n$ matrices over a skew field or a commutative DVD. For more details on the $\rfix$-preorder and further applications of the factorization theorems of premonoids to factorizations into idempotents (and thus ``highly non-classical factorizations''), we refer the reader to the next section.

\section{Idempotent factorizations in \textup{r.\textsc{f}ix}-premonoids}\label{sec: idempotent}

As recalled in the previous section, applications of Theorem~\ref{thm: FTF} and Theorem~\ref{thm: strongly Artinian} include existence results for factorizations into atoms and irreducibles (thus related to the context of classical factorization), but also for factorizations into idempotent elements of non-commutative and non-cancellative monoids. This attests to how the language of premonoids allows the study of factorization problems of very different nature within the same theoretical framework. In this section of the paper, we delve into further ``non-classical'' applications of the aforementioned theorems, all leaning on a specific preorder. We recover the terminology from \cite[Section 3]{Co-Tr-21(a)}.

\begin{definition}\label{def:rfix}
Given a monoid $H$ and an element $a \in H$, we set 
\[\rfix_H(a) := \{x \in H : ax = x\}\] 
and let the \evid{$\rfix$-preorder on $H$} be the binary relation $\preceq$ on $H$ defined, for every $b,c\in H$, by 
\[b \preceq c \text{ if and only if }\rfix_H(c) \subseteq \rfix_H(b).\]
If $\preceq$ is the $\rfix$-preorder on $H$, we will refer to the premonoid $H_{\rfix}:=(H,\preceq)$ as the \evid{$\rfix$-premonoid} of $H$. Given a subpremonoid $\mathcal{K}=(K,\preceq_K)$ of $H_{\rfix}$, we will say that an element of $K$ is an $\rfix_H$\evid{[-non]-unit}, an \evid{$\rfix_H$-quark}, or an \evid{$\rfix_H$-irreducible [of degree $s$]} of $K$ if it is a [non]-unit, a quark, or an irreducible [of degree $s$] of $\mathcal K$, respectively. Clearly, the unique unit of $H_{\rfix}$ is the identity $1_H$.
\end{definition}

Cossu and Tringali investigated in \cite{Co-Tr-21(a)}, $\rfix$-premonoids of multiplicative monoids of special rings, including matrix rings. In particular, they considered factorizations in subpremonoids admitting idempotent irreducibles. In this section we focus on the $\rfix$-premonoid of a monoid $H$ that is {\it not} the multiplicative monoid of a ring, namely, the full transformation monoid of a finite set $X$. We will prove, in particular, that the irreducibles in the $\rfix_H$-subpremonoid of non-invertible maps from $X$ to $X$ are idempotent, while they are transpositions in the $\rfix_H$-subpremonoid of permutations of $X$.

\subsection{The \textup{r.\textsc{f}ix}-premonoid of the full transformation monoid}\label{subsect:howie}

Let $X$ be a finite set with cardinality $|X|=n\geq 1$, say $X:=\{1,\ldots,n\}$. From now till the end of the section, $H$ will denote the \evid{full transformation monoid of $X$}, i.e., the set of all the functions $h:X\to X$, $x\mapsto h(x)$, with the operation of composition. For $f,g\in H$, we write $fg$ to denote the composition $f\circ g$, and $f=g$ if $f(x)=g(x)$ for every $x\in X$. Moreover, for a transformation $h\in H$, we let $\fix(h)$ be the set of the elements $x\in X$ such that $h(x)=x$, and $h(X)$ be the image of $h$. Thus, it is readily seen that
\begin{equation}\label{eq:1}
    h\in\rfix_H(f)=\{g \in H : fg=g\} \text{ if and only if }h(X)\subseteq \fix(f).
\end{equation}

The following lemma gives a characterization of the $\rfix$-preorder on $H$, useful for our context.
\begin{lemma}\label{fix-preorder}
Let $\preceq$ be the $\rfix$-preorder on $H$. Then, for every $f,g\in H$, \[f\preceq g \text{ if and only if }\fix(g)\subseteq\fix(f).\]
\end{lemma}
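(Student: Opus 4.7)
The plan is to unwind the definitions: by Definition \ref{def:rfix}, $f \preceq g$ means $\rfix_H(g) \subseteq \rfix_H(f)$, and by \eqref{eq:1}, membership in $\rfix_H(f)$ is controlled entirely by whether the image of the map lies in $\fix(f)$. So the claimed equivalence really says that $\rfix_H(f)$ ``detects'' $\fix(f)$, and this should hold because $H$ is rich enough to contain maps with arbitrary prescribed image, in particular the constant maps.

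For the easy direction, I would assume $\fix(g) \subseteq \fix(f)$ and take an arbitrary $h \in \rfix_H(g)$. By \eqref{eq:1}, $h(X) \subseteq \fix(g) \subseteq \fix(f)$, so $h \in \rfix_H(f)$ again by \eqref{eq:1}; hence $\rfix_H(g) \subseteq \rfix_H(f)$, i.e., $f \preceq g$.

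For the converse, I would assume $f \preceq g$ and pick any $x \in \fix(g)$. The natural witness is the constant transformation $c_x \in H$ defined by $c_x(y) := x$ for all $y \in X$: its image is $\{x\} \subseteq \fix(g)$, so $c_x \in \rfix_H(g)$ by \eqref{eq:1}. The hypothesis then forces $c_x \in \rfix_H(f)$, whence $\{x\} = c_x(X) \subseteq \fix(f)$, i.e., $x \in \fix(f)$. Thus $\fix(g) \subseteq \fix(f)$.

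There is no real obstacle here; the only point that deserves a moment's care is verifying that constant maps genuinely belong to $H$ (which is immediate because $H$ is the \emph{full} transformation monoid of $X$, so every function $X \to X$ is available) and that \eqref{eq:1} applies in both directions. The argument does not even use finiteness of $X$, so the lemma would hold equally well on any set.
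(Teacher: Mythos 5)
Your proof is correct and follows essentially the same strategy as the paper's: one direction is a trivial unwinding of \eqref{eq:1}, and the nontrivial direction is proved by exhibiting an explicit witness in $\rfix_H(g)$ whose image pins down $\fix(g)$. The only variation is cosmetic: the paper builds a single map whose image is exactly $\fix(g)$ (requiring a reordering of $X$ and a separate check for $\fix(g)=\emptyset$), whereas you use a constant map $c_x$ for each $x\in\fix(g)$, which sidesteps the case split and, as you correctly observe, removes any reliance on finiteness of $X$. That is a small tidiness gain, but the underlying idea is identical.
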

\begin{proof}
By the definition of the $\rfix$-preorder, for $f,g\in H$, $f\preceq g$ if and only if $\rfix_H(g)\subseteq\rfix_H(f)$. 
Let us assume that $\rfix(g)\subseteq\rfix(f)$ and prove that $\fix(g)\subseteq \fix(f)$. If $\fix(g)=\emptyset$ the conclusion is trivial. If $\fix(g)\ne\emptyset$, up to a reordering of the elements of $X$, we can assume $\fix(g)=\{1,\dots,d\}$ with $1\le d\le n$. Define $h\in H$ by $h(i)=i$ for $i\in\llb 1,d\rrb$, $h(i)=1$ for $i\in \llb d+1,n\rrb$. Then $gh=h$, i.e., $h\in\rfix_H(g)$ and, by hypothesis, $h\in\rfix_H(f)$. It then follows by \eqref{eq:1} that $h(X)=\fix(g)\subseteq \fix(f)$. 
On the other hand, assume $\fix(g)\subseteq \fix(f)$ and let $h\in \rfix(g)$. Then $h(X)\subseteq\fix(g)\subseteq\fix(f)$, and hence $h\in \rfix_H(f)$ again by \eqref{eq:1}. 
\end{proof}

\begin{remark}\label{rem: rfix-height} 
Since the full transformation monoid $H$ of a finite set $X$ is a finite monoid, the $\rfix$-premonoid $H_{\rfix}$ is strongly Artinian in light of Remark~\ref{rem:height}\ref{rem:height(0)}. Given a non-identity map $h\in H$, if $h_1, h_2, \dots, h_\ell$ are non-identity maps of $H$ (so, non-units of $H_{\rfix}$) such that $h=h_1$ and $\fix(h_{k})\subsetneq \fix(h_{k+1})$ (i.e., $h_{k+1}\prec h_k$ with respect to the $\rfix$-preorder $\preceq$) for every $k\in\llb 1,\ell-1\rrb$, then $|\fix(h)|\le n+\ell$. It follows that $\hgt(h)\le n-|\fix(h)|$, where $n=|X|$. 
\end{remark}

\medskip

Following the same approach of \cite[Sections 4 and 5]{Co-Tr-21(a)}, we focus our attention to the submonoid $H^\sharp$ of $H$ consisting of the non-invertible (or singular) maps on $X$ and the identity ${\rm id}_X$, and we consider the subpremonoid of $H_{\rfix}$ with $H^\sharp$ as ground monoid. We call such premonoid the \evid{singular subpremonoid of $H_{\rfix}$}. Note that if $|X|:=n=1$, then $H=H^\sharp=\{{\rm id}_X\}$, so we assume from now on that $n\ge 2$. 
As pointed out in Remark~\ref{rem:height}\ref{rem:height(1)}, every subpremonoid of a strongly Artinian premonoid is strongly Artinian. In particular, it is easy to verify that for every non-identity $h\in H^\sharp$, it is always possible to realize a sequence of $n-|\fix(h)|$ elements of $H^\sharp$ starting from $h$ and strictly decreasing with respect to the $\rfix$-preorder. Therefore, we get from Remark~\ref{rem: rfix-height} that, for every $h\in H^\sharp\setminus\{{\rm id}_X\}$,
\begin{equation}\label{eq: height}
   \hgt(h)=n-|\fix(h)|, 
\end{equation}
where by $\hgt(\cdot)$ we mean the height relative to the singular subpremonoid of $H_{\rfix}$. We then gather from Remarks \ref{rem:height}\ref{rem:height(1)} and \ref{rem:height}\ref{rem:height(2)} that the singular subpremonoid of the $\rfix$-premonoid of $H$ is strongly Artinian and hence Artinian. Thus, Theorem~\ref{thm: FTF} ensures that each non-identity element of $H^\sharp$ factors into a functional composition of $\rfix_H$-irreducibles. We will give a characterization of $\rfix_H$-irreducibles of $H^\sharp$ in Proposition~\ref{pro: idempotent irreducibles} but first, we characterize its $\rfix_H$-quarks. For, a preliminary definition is in order: a transformation $\tau \in H$ is  a \evid{quasi-identity} if $|\fix(\tau)|=n-1$. Every quasi-identity is obviously i\-dem\-po\-tent and non-invertible.

\begin{proposition}\label{prop: quarks}
 A singular transformation $\alpha\in H^\sharp$ is an $\rfix_H$-quark if and only if it is a quasi-identity.   
\end{proposition}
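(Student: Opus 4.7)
The plan is to obtain the proposition as an immediate consequence of two facts already in place. First, equation (6) states that for every non-identity $h$ in the singular subpremonoid of $H_{\rfix}$, one has $\hgt(h) = n - |\fix(h)|$, where the height is taken in that subpremonoid. Second, by Remark \ref{rem:height}\ref{rem:height(1)}, in any strongly artinian premonoid an element is a quark if and only if its height equals $1$. The singular subpremonoid of $H_{\rfix}$ is strongly artinian (as noted in the paragraph surrounding equation (6), using Remark \ref{rem:height}\ref{rem:height(0)}), so both facts apply.

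Combining these, an element $\alpha \in H^\sharp$ is an $\rfix_H$-quark if and only if $\hgt(\alpha) = 1$, which by equation (6) is equivalent to $n - |\fix(\alpha)| = 1$, i.e. to $|\fix(\alpha)| = n-1$, i.e. to $\alpha$ being a quasi-identity. The boundary case $\alpha = \id_X$ causes no issue, since $\id_X$ is the unique $\rfix_H$-unit (hence not a quark) and satisfies $|\fix(\id_X)| = n$ (hence not a quasi-identity), so it is excluded by both conditions.

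The only point one needs to double-check to make this plan airtight is that a quasi-identity $\tau$ really does lie in $H^\sharp$, i.e. that it is singular. This is quick: since $|\fix(\tau)|=n-1 \geq 1$, the unique index $i\in X\setminus\fix(\tau)$ satisfies $\tau(i)\in X$, and if $\tau(i)=i$ it would contradict $i\notin\fix(\tau)$, so $\tau(i) = j$ for some $j\neq i$; but $j\in\fix(\tau)$, so $\tau(i)=\tau(j)=j$, showing $\tau$ is not injective and thus non-invertible.

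There is no real obstacle here; the whole content of the proposition has been absorbed into the height computation (6) and the general definitional fact in Remark \ref{rem:height}\ref{rem:height(1)}. If for some reason one preferred a self-contained argument avoiding (6), one would instead prove the forward direction by constructing, for any $\alpha$ with $|\fix(\alpha)|\le n-2$, an explicit quasi-identity $\beta$ with $\fix(\alpha)\subsetneq\fix(\beta)$ (pick $j\notin\fix(\alpha)$ and let $\beta$ fix every element except $j$, sending $j$ to some $i\neq j$), so that $\beta\prec\alpha$ by Lemma \ref{fix-preorder}, contradicting the quark assumption; and prove the converse by observing that if $|\fix(\alpha)|=n-1$ and $\beta\prec\alpha$ is a non-unit, then $\fix(\alpha)\subsetneq\fix(\beta)$ forces $|\fix(\beta)|=n$, whence $\beta=\id_X$, a contradiction.
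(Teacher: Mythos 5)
Your proposal is correct, and your main route genuinely differs in presentation from the paper's. The paper proves the proposition directly: for the forward direction it constructs, for an assumed quark $\alpha$ with $|\fix(\alpha)|<n-1$, an explicit singular quasi-identity $\beta$ (fixing $\{1,\dots,n-1\}$ and sending $n$ to $1$) with $\fix(\alpha)\subsetneq\fix(\beta)$, contradicting quarkhood; for the converse it observes that strict enlargement of a fixed set of size $n-1$ forces $\beta=\id_X$. Your alternative, ``self-contained'' paragraph is essentially this same argument. Your main route instead deduces everything from the height formula \eqref{eq: height}, $\hgt(h)=n-|\fix(h)|$, together with the general fact from Remark~\ref{rem:height}\ref{rem:height(1)} that an element is a quark iff its height is $1$; this is more economical and more in the spirit of the abstract premonoid framework. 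Two caveats to be aware of: this is not circular only because the paper establishes \eqref{eq: height} before Proposition~\ref{prop: quarks}, but the paper's justification of \eqref{eq: height} itself leans on the same chain-building intuition (it is left as ``easy to verify''), so the underlying mathematics is not really shorter, just better factored; and \eqref{eq: height} is stated only for $h\ne\id_X$, which you correctly handle by disposing of the boundary case separately. Your check that every quasi-identity is singular (hence really lives in $H^\sharp$) matches the remark the paper makes just before the proposition.
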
 
\begin{proof}
   By Remark~\ref{rem:height}\ref{rem:height(1)}, $\alpha$ is an $\rfix_H$-quark if and only if $\hgt(\alpha)=1$. By \eqref{eq: height}, this happens if and only if $|\fix(\alpha)|=n-1$, i.e., $\alpha$ is a quasi-identity. 
\end{proof}

\begin{proposition}\label{pro: idempotent irreducibles}
	Every $\rfix_H$-irreducible of $H^\sharp$ is a quasi-identity.
\end{proposition}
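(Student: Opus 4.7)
The plan is to prove the contrapositive: if $\alpha\in H^\sharp\setminus\{\id_X\}$ is not a quasi-identity (equivalently, $|\fix(\alpha)|\le n-2$), then $\alpha$ is not an $\rfix_H$-irreducible of $H^\sharp$. By Lemma~\ref{fix-preorder}, it will be enough to exhibit a factorization $\alpha=\beta\gamma$ with $\beta,\gamma\in H^\sharp\setminus\{\id_X\}$ and $\fix(\alpha)\subsetneq\fix(\beta)$ and $\fix(\alpha)\subsetneq\fix(\gamma)$: the two strict inclusions then give $\beta\prec\alpha$ and $\gamma\prec\alpha$, contradicting irreducibility.

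The first attempt exploits that $\alpha$, being singular, is non-injective on $X$, so there exist distinct $a,b\in X$ with $\alpha(a)=\alpha(b)$; since $a\ne b$ at most one of them lies in $\fix(\alpha)$, and I may take $a\notin\fix(\alpha)$. I let $\gamma$ be the quasi-identity sending $a\mapsto b$ and fixing every other element, and I define $\beta$ by $\beta(x):=\alpha(x)$ for $x\ne a$ and $\beta(a):=a$. The identity $\alpha(a)=\alpha(b)$ yields $\beta\gamma=\alpha$, while $|\fix(\gamma)|=n-1$ and $\fix(\beta)=\fix(\alpha)\cup\{a\}$ are immediate, so the strict fix-point inclusions hold. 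The only possible obstruction is that $\beta$ might happen to be a permutation and so fall outside $H^\sharp$; a direct check shows this occurs precisely when $\alpha|_{X\setminus\{a\}}$ is injective and $a\notin\alpha(X)$, and it forces $|\alpha(X)|=n-1$, the pair $(a,b)$ to be the unique collision of $\alpha$, and $\pi:=\alpha|_{X\setminus\{a\}}$ to be a non-identity permutation of $X\setminus\{a\}$.

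This exceptional configuration I plan to handle in two sub-cases. If $b\notin\fix(\alpha)$, then a rank count gives $b\in\alpha(X\setminus\{b\})$, so repeating the previous construction with the roles of $a$ and $b$ swapped (that is, using the quasi-identity $b\mapsto a$ as the right factor) produces a singular $\beta'$ with $\fix(\beta')=\fix(\alpha)\cup\{b\}$. Otherwise $b\in\fix(\alpha)$, in which case $\alpha(a)=b$, $\pi(b)=b$, and the non-triviality of $\pi$ (since $\alpha$ is not a quasi-identity) guarantees a cycle $(c_0,c_1,\dots,c_{m-1})$ of $\pi$, with $m\ge 2$, contained in $X\setminus\{a,b\}$. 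I will then set $\gamma(a):=b$, $\gamma(c_{m-1}):=a$, and $\gamma(x):=x$ for all other $x$, together with $\beta(a):=c_0$, $\beta(c_{m-1}):=c_{m-1}$, and $\beta(x):=\alpha(x)$ elsewhere. A direct computation will give $\beta\gamma=\alpha$; the forced equality $\beta(c_{m-2})=\beta(c_{m-1})=c_{m-1}$ will show $\beta$ is singular; and the counts $\fix(\beta)=\fix(\alpha)\cup\{c_{m-1}\}$ and $\fix(\gamma)=X\setminus\{a,c_{m-1}\}$, together with the observation that $\pi$ has at least two non-fixed points (so $|\fix(\alpha)|\le n-3$), will secure the two strict fix-point inclusions.

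The main obstacle is this last sub-case, where the ``peel off a quasi-identity'' strategy fundamentally breaks down: its substitute must borrow one point of a cycle of $\pi$ as a temporary buffer for the pivot, and this cycle-transfer construction appears to be the minimal adjustment that simultaneously preserves $\beta\gamma=\alpha$, keeps both factors inside $H^\sharp$, and enlarges both fixed-point sets.
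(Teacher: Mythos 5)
Your proposal is correct and takes essentially the same approach as the paper: both argue by contrapositive, peel a quasi-identity factor off a generic singular $\alpha$ with $|\fix(\alpha)|\le n-2$, and resolve the exceptional situation—which is exactly the paper's Case~2, where $\alpha^2(X)=\alpha(X)$ and $|\alpha(X)|=n-1$—by borrowing a point from a nontrivial cycle of the induced permutation $\alpha|_{X\setminus\{a\}}$. The differences are cosmetic: you extract the quasi-identity on the opposite side of the composition and phrase the cycle-transfer explicitly, where the paper fixes the element $d+1$ and its preimage under that permutation.
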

\begin{proof}
	Let $\alpha\in H^\sharp$ be an $\rfix_H$-irreducible and assume as a contradiction that $|\fix(\alpha)|<n-1$. If $n=2$ the result is trivial, so assume $n\ge 3$.
	Set $d := |\fix(\alpha)|$, $r := |\alpha(X)|$, and $\alpha(i):=a_i$ for every $i \in \llb 1, n \rrb$. Up to a suitable enumeration of the elements of $X$ we can assume that $a_i=i$ for $i\in\llb 1,d\rrb$, $a_i\ne i$ for $i\in \llb d+1,n\rrb$ and $a_i\in\llb 1,r\rrb$ for all $i$ (i.e., $\alpha(X)=\{1,\ldots,r\}$). Under our assumptions we have $0\leq d\leq \min\{r,n-2\}$, $1\leq r\leq n-1$ and $\llb d+1,n-1\rrb\ne \emptyset$. We distinguish two cases.
 
\medskip

\begin{enumerate*}[label=\textsc{Case} \arabic{*},mode=unboxed]
\item\label{Case1}: $\alpha^2(X)\subsetneq\alpha(X)$ or $r<n-1$.

    In this case there exists $j\in \llb d+1, n-1\rrb$ such that $a_j=a_i$ for some $i\in \llb 1,r\rrb$ strictly smaller than $j$. Let us then define $\beta,\gamma\colon X\to X$ by:
	\begin{equation}\label{eq: case 1}  
	\beta(i) := \left\{
	\begin{array}{ll}
	i & \text{if } i\in \llb 1,n-1 \rrb, \\
	a_n & \text{if } i = n,
	\end{array}
	\right.\quad\text{and}\quad
	\gamma(i) :=
	\left\{
	\begin{array}{ll}
	i & \text{if } i\in \llb 1,d \rrb\cup\{n\}, \\
	a_i & \text{if } i\in \llb d+1,n-1 \rrb.
	\end{array}
	\right.
	\end{equation}
	The maps $\beta$ and $\gamma$ are both singular: $\beta$ is a quasi-identity, and $\gamma(i)=\gamma(j)$. Moreover $\fix(\alpha)\subsetneq \fix(\beta), \fix(\gamma)$. It is readily verified that $\alpha=\beta\gamma$, and this contradicts the $\rfix_H$-irreducibility of $\alpha$.
\end{enumerate*}

\medskip

\begin{enumerate*}[label=\textsc{Case} \arabic{*},mode=unboxed,resume]
\item\label{Case2}: $\alpha^2(X)=\alpha(X)$ and $r=n-1$.
	
	Note that if $d+1=r$, then $\alpha^2(X)=\alpha(\{1,\ldots,d,r\})=\{1,\ldots,d,a_r\}=\{1,\ldots,d,r\}=\alpha(X)$ implies that $a_r =r$, which is impossible. Thus, it must be $d+2\le r$. Moreover, since $\alpha^2(X)=\alpha(X)$, for every $j\in\llb d+1,r\rrb$ there is a unique $k\in\llb d+1,r\rrb\setminus \{j\}$ such that $a_k=j$. We distinguish two subcases depending on whether or not $a_n$ is in $\llb 1,d\rrb$.
 \begin{enumerate}[label=\textup{(\alph{*})}, mode=unboxed]
 \item\label{2a} If $a_n\in\llb 1,d\rrb$, let $k\in\llb d+2,r\rrb$ be such that $a_k=d+1$. Define $\delta,\eta\colon X\to X$ by:
	\begin{equation}\label{eq: case 2a}  
 \delta(i) := \left\{
	\begin{array}{ll}
	i & \text{if } i\in \llb 1,r\rrb\setminus\{d+1\},\\
	a_i & \text{if } i=d+1,\\
	d+1 & \text{if } i = n,
	\end{array}
	\right.\quad\text{and}\quad
	\eta(i) :=
	\left\{
	\begin{array}{ll}
	i & \text{if } i\in \llb 1,d+1 \rrb,\\
	a_i & \text{if } i\in \llb d+2,r \rrb\setminus\{k\}, \\
	n & \text{if } i=k, \\
	a_n & \text{if } i=n.
	\end{array}
	\right.
   \end{equation}
	Both $\delta$ and $\eta$ are singular maps: since $a_{d+1}\in\llb 1,r\rrb\setminus\{d+1\}$, then $\delta(d+1)=a_{d+1}=\delta(a_{d+1})$, and $\eta(n)=\eta(a_n)$. Moreover, $\fix(\alpha)\subsetneq \fix(\delta),\fix(\eta)$ and it is routine to verify that $\alpha=\delta\eta$. Therefore, $\alpha$ cannot be an $\rfix_H$-irreducible of $H^\sharp$. 

\item\label{2b} If $a_n\in\llb d+1,r\rrb$, let $k\in \llb d+1,r\rrb\setminus\{a_n\}$ be such that $a_k=a_n$. Define 
	\begin{equation}\label{eq: case 2b}  
 \zeta(i):=\left\{
	\begin{array}{ll}
	i & \text{if } i\in \llb 1,d\rrb\cup\{n\},\\
	n & \text{if } i=k,\\
	a_i & \text{if } i\in\llb d+1,r\rrb\setminus\{k\}.
	\end{array}
	\right.
 \end{equation}
	It is then immediate to see that $\alpha=\beta\zeta$, where $\beta$ is the quasi-identity in \eqref{eq: case 1}.
 We reach a contradiction since $\zeta$ is a singular map ($\zeta(n)=\zeta(k)$) and $\fix(\alpha)\subsetneq \fix(\zeta)$.
 \end{enumerate}
\end{enumerate*}
All the possible cases have been considered, thus we conclude that $\alpha$ must be a quasi-identity. 
\end{proof}

In light of Theorem~\ref{thm: FTF}, we then obtain as a corollary of Proposition~\ref{pro: idempotent irreducibles} the classical theorem by Howie on the idempotent generation of $H^\sharp$, already mentioned in the introduction.
\begin{corollary}[{\cite[Theorem I]{Ho66}}]\label{cor: Howie}
Given a finite set $X$ with cardinality $n\ge 1$, every non-invertible map from $X$ to $X$ is a composition of quasi-identities.
\end{corollary}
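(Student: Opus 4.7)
The plan is to derive Corollary \ref{cor: Howie} as an almost immediate consequence of Theorem \ref{thm: FTF} combined with Proposition \ref{pro: idempotent irreducibles}, using the groundwork laid out in the paragraphs preceding Proposition \ref{prop: quarks}.

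First I would dispose of the trivial case $n=1$: here the only map $X \to X$ is $\id_X$, which is invertible, so the statement holds vacuously. Assume therefore $n \ge 2$, and let $\mathcal{H}^\sharp$ denote the singular subpremonoid of $H_{\rfix}$, whose ground monoid is $H^\sharp$. As already observed in the text, $H$ is finite, so by Remark \ref{rem:height}\ref{rem:height(0)} the premonoid $H_{\rfix}$ is strongly artinian; and by Remark \ref{rem:height}\ref{rem:height(1)}, every subpremonoid of a strongly artinian premonoid is strongly artinian, hence $\mathcal{H}^\sharp$ is strongly artinian as well. By Remark \ref{rem:height}\ref{rem:height(2)}, it is in particular artinian.

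Next I would identify the units of $\mathcal{H}^\sharp$. Since $\id_X$ is the unique unit of $H_{\rfix}$ (as noted in Definition \ref{def:rfix}) and $\id_X \in H^\sharp$, it follows that $\id_X$ is the unique $\rfix_H$-unit of $H^\sharp$. Consequently, the $\rfix_H$-non-units of $H^\sharp$ are exactly the non-invertible (i.e., singular) maps from $X$ to $X$.

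With these preliminaries in place, I would apply Theorem \ref{thm: FTF} to the artinian premonoid $\mathcal{H}^\sharp$: every non-invertible map $h \in H^\sharp$ can be written as a (non-empty, finite) composition of $\rfix_H$-irreducibles of $H^\sharp$. By Proposition \ref{pro: idempotent irreducibles}, every such $\rfix_H$-irreducible is a quasi-identity, so $h$ factors as a composition of quasi-identities, as claimed. There is no real obstacle here: the whole content of the corollary has been absorbed into Proposition \ref{pro: idempotent irreducibles}, and the remaining step is just to invoke the abstract existence theorem against the backdrop of the artinianity already established in Subsection \ref{subsect:howie}.
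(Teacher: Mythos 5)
Your argument is exactly the one the paper has in mind: the singular subpremonoid of $H_{\rfix}$ is (strongly) artinian because $H$ is finite, its only unit is $\id_X$, so Theorem \ref{thm: FTF} yields a factorization of each singular map into $\rfix_H$-irreducibles, and Proposition \ref{pro: idempotent irreducibles} identifies these with quasi-identities. Your write-up just spells out the steps the paper compresses into the one-sentence remark preceding the corollary.
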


The above result can be refined in light of Theorem~\ref{thm: strongly Artinian}, that provides a bound on the minimum number of idempotent factors.

\begin{theorem}\label{thm: a refinement of Howie}
Given a finite set $X$ with cardinality $n\ge 1$, every non-invertible map $\alpha\colon X\to X$ is a composition of at most $2(n-|\fix(\alpha)|)-1$ quasi-identities.
\end{theorem}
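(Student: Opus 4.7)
The plan is to invoke Theorem \ref{thm: strongly artinian} applied to the singular subpremonoid of $H_{\rfix}$ with parameter $s = 3$. By Proposition \ref{prop: quarks}, the $\rfix_H$-quarks of $H^\sharp$ are exactly the quasi-identities, and by equation \eqref{eq: height} we have $\hgt(\alpha) = n - |\fix(\alpha)|$ for every non-identity $\alpha \in H^\sharp$. Hence, once the hypothesis of Theorem \ref{thm: strongly artinian} is verified, its conclusion yields a decomposition of $\alpha$ into at most $(s-1)\hgt(\alpha) - (s-2) = 2\hgt(\alpha) - 1 = 2(n - |\fix(\alpha)|) - 1$ quasi-identities, which is exactly the desired bound.

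It then remains to verify the hypothesis of Theorem \ref{thm: strongly artinian} with $s = 3$: for every non-identity, non-quasi-identity $\alpha \in H^\sharp$, one must exhibit a factorization $\alpha = y_1 \cdots y_\ell$ with $\ell \in \llb 2, 3 \rrb$, each $y_i$ a non-identity element of $H^\sharp$, $y_i \prec \alpha$ for $i \in \llb 2, \ell \rrb$, and $\hgt(y_1) + \cdots + \hgt(y_\ell) \leq \hgt(\alpha) + \ell - 2$. I will reuse the case analysis from the proof of Proposition \ref{pro: idempotent irreducibles}. In Cases 1 and 2b, the factorization $\alpha = \beta \gamma$ (respectively, $\alpha = \beta \zeta$) already exhibited there has $\beta$ a quasi-identity, hence $\hgt(\beta) = 1$, while $|\fix(\gamma)| = d+1$ (respectively, $|\fix(\zeta)| = d+1$) yields $\hgt(\gamma) = \hgt(\alpha) - 1$. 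The heights therefore sum to $\hgt(\alpha)$, matching the required inequality for $\ell = 2$, and the strict inclusion $\fix(\alpha) \subsetneq \fix(\gamma)$ (respectively, $\fix(\alpha) \subsetneq \fix(\zeta)$) gives the needed $\prec$-relation.

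The main obstacle is Case 2a, where $\alpha = \delta \eta$ with $|\fix(\delta)| = n - 2$ and $|\fix(\eta)| = d + 1$, so $\hgt(\delta) + \hgt(\eta) = \hgt(\alpha) + 1$, which is too large for $\ell = 2$. I will remedy this by further splitting $\delta$ into a product of two quasi-identities $\delta = \delta_1 \delta_2$: let $\delta_1$ fix every point of $X$ except $n$, which it sends to $d+1$, and let $\delta_2$ fix every point of $X$ except $d+1$, which it sends to $a_{d+1}$. A direct check, using that $a_{d+1} \in \llb 1, n-1 \rrb$ and hence $a_{d+1} \neq n$, confirms $\delta_1 \delta_2 = \delta$. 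This produces a 3-factorization $\alpha = \delta_1 \delta_2 \eta$ whose heights sum to $1 + 1 + (\hgt(\alpha) - 1) = \hgt(\alpha) + 1$, matching the bound for $\ell = 3$; moreover the strict inclusions $\fix(\alpha) \subsetneq \fix(\delta_2)$ and $\fix(\alpha) \subsetneq \fix(\eta)$ are immediate from the definitions of the maps involved. With this refinement in place, the hypothesis of Theorem \ref{thm: strongly artinian} is satisfied and the theorem yields the claim.
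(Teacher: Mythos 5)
Your proposal is correct and follows essentially the same route as the paper: invoke Theorem \ref{thm: strongly artinian} with $s=3$ for the singular subpremonoid, reuse the factorizations from Proposition \ref{pro: idempotent irreducibles}, and in Case 2a split $\delta$ into the two quasi-identities $\delta_1, \delta_2$ (with $\delta_1(n)=d+1$ and $\delta_2(d+1)=a_{d+1}$) to get the three-factor decomposition $\alpha = \delta_1\delta_2\eta$ whose heights sum to $\hgt(\alpha)+1$. Incidentally, your height bookkeeping for $\eta$ (namely $\hgt(\eta)=\hgt(\alpha)-1$) is stated correctly, whereas the paper's proof contains a small typo ($\hgt(\eta)=\hgt(\alpha)+1$) that does not affect the final sum.
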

\begin{proof}
Let $H$ be the full transformation monoid of $X$. In light of \eqref{eq: height} and Proposition~\ref{prop: quarks}, the claim follows from Theorem~\ref{thm: strongly Artinian} with $s=3$, once proved that the singular subpremonoid of $H_{\rfix}$ satisfies its hypotheses. Therefore, we show that for every $\alpha\in H^\sharp$ with $|\fix(\alpha)|<n-1$ there exist non-identity maps $\beta_1,\dots, \beta_k\in H^\sharp$, with $k\in\llb 2,3\rrb$, such that $\fix(\alpha)\subsetneq \fix(\beta_i)$ for each $i\in\llb 1,k\rrb$, $\alpha=\beta_1\cdots \beta_k$, and $\hgt(\beta_1)+\cdots+\hgt(\beta_k)\le \hgt(\alpha)+1$. Fix a map $\alpha\in H^\sharp$ such that $d := |\fix(\alpha)|\le n-2$. We can assume without loss of generality that $\alpha(i):=a_i\in \llb 1,r\rrb$ for every $i\in \llb 1,n\rrb$, with $r\le n-1$, $a_i=i$ for every $i\in \llb 1,d\rrb$, and $a_i\ne i$ for every $i\in \llb d+1,n\rrb$. From now on, the proof mirrors that of Proposition~\ref{pro: idempotent irreducibles}. In \ref{Case1}, $\alpha=\beta\gamma$, for $\beta$ and $\gamma$ as in \eqref{eq: case 1}. Since $\hgt(\beta)=1$ and $\hgt(\gamma)=n-(d+1)=\hgt(\alpha)-1$, then $\hgt(\beta)+\hgt(\gamma)=\hgt(\alpha)$. \ref{Case2}\ref{2b} works analogously: $\alpha=\beta \zeta$, for $\zeta$ as in \eqref{eq: case 2b}. Since $\hgt(\zeta)=n-(d+1)=\hgt(\alpha)-1$, then $\hgt(\beta)+\hgt(\zeta)=\hgt(\alpha)$. \ref{Case2}\ref{2a} is slightly different and requires a deeper analysis. In this case $\alpha=\delta \eta$, where $\delta$ and $\eta$ are defined in \eqref{eq: case 2a}. For our convenience, we recall that  $\delta(i) :=i$ for $i=\llb 1,n-1\rrb\setminus\{d+1\}$, $\delta(d+1):=a_{d+1}$, and $\delta(n):=d+1$. It is immediate to verify that $\delta=\delta_1\delta_2$, where $\delta_1$ and $\delta_2$ are two quasi-identities defined by $\delta_1(n):=d+1$ and $\delta_2(d+1):=a_{d+1}$. It then follows that $\alpha=\delta_1\delta_2\eta$, with $\hgt(\delta_1)+\hgt(\delta_2)+\hgt(\eta)=\hgt(\alpha)+1$. In fact $\hgt(\delta_1)=\hgt(\delta_2)=1$ and $\hgt(\eta)=n-(d+1)=\hgt(\alpha)-1$. This concludes the proof. 
\end{proof}

\medskip

Summing up, as a corollary of Theorems \ref{thm: FTF} and \ref{thm: strongly Artinian}, applied to the singular subpremonoid of the $\rfix$-premonoid $H_{\rfix}$ of the full transformation monoid $H$ of a finite set $X$, we obtain a classical theorem of Howie on the idempotent factorization of singular maps of a finite set $X$, along with a refinement of it. It is then natural to ask what kind of information we can derive from the characterization of $\rfix_H$-quarks and $\rfix_H$-irreducibles in the subpremonoid of $H_{\rfix}$ having the group of units $H^\times$ of $H$ as the ground monoid.
Since $H^\times$ is exactly the symmetric group of $X$, and since every permutation of $X$ is a composition of transpositions, it is reasonable to expect that every $\rfix_H$-irreducible of $H^\times$ is a transposition. We will actually prove something more in Theorem~\ref{thm: transpositions}. In the following, we rephrase in our terminology the results in \cite[Example 3.13]{Tr20(c)}.

 If $f\colon X\to X$ is a non-identity permutation of the set $X:=\{1,\dots,n\}$, then $n\ge 2$ and $|\fix(f)|\leq n-2$. In particular, $|\fix(f)|=n-2$ if and only if $f$ is a transposition. In light of this fact, it is immediate to see that that, for every $f\in H^\times \setminus \{{\rm id}_X\}$,
 \begin{equation}\label{Eq: height-permutation}
      \hgt(f)= n-1-|\fix(f)|,
 \end{equation}
where by $\hgt(\cdot)$ we mean the height relative to the subpremonoid of $H_{\rfix}$ having $H^\times$ as a ground monoid. Then next proposition characterizes $\rfix_H$-quarks in such premonoid.

\begin{proposition}\label{prop: transpositions}
 A permutation $\alpha$ of $X$ is an $\rfix_H$-quark of $H^\times$ if and only if $\alpha$ is a transposition.   
\end{proposition}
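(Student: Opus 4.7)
My plan is to deduce the proposition directly from the height formula \eqref{Eq: height-permutation} and the characterization of quarks in terms of height from Remark \ref{rem:height}\ref{rem:height(1)}. Since $\alpha$ is an $\rfix_H$-quark of $H^\times$ if and only if $\hgt(\alpha)=1$ (where the height is taken relative to the subpremonoid with ground monoid $H^\times$), and since \eqref{Eq: height-permutation} tells us that $\hgt(\alpha)=n-1-|\fix(\alpha)|$ for every non-identity permutation, the condition $\hgt(\alpha)=1$ is equivalent to $|\fix(\alpha)|=n-2$, which by the standard description of transpositions is exactly the condition that $\alpha$ is a transposition.

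As an alternative, one can give the argument intrinsically, and this is probably cleaner for the reader. First I would invoke Lemma \ref{fix-preorder} to translate the $\rfix$-preorder on $H^\times$ into the pointwise condition $\alpha\preceq\beta \iff \fix(\beta)\subseteq\fix(\alpha)$, and then treat the two directions separately. For the ``if'' direction, assume $\alpha$ is a transposition, so $|\fix(\alpha)|=n-2$. Any permutation $\beta$ with $\fix(\alpha)\subsetneq\fix(\beta)$ satisfies $|\fix(\beta)|\ge n-1$; but a permutation that fixes $n-1$ points must fix all $n$ points, so $\beta=\id_X$, i.e., $\beta$ is the only unit of the subpremonoid. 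Hence no non-unit lies strictly below $\alpha$, making $\alpha$ a quark.

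For the ``only if'' direction, suppose $\alpha\in H^\times$ is a non-identity permutation that is \emph{not} a transposition. Then $|\fix(\alpha)|\le n-3$, so the complement $X\setminus\fix(\alpha)$ contains at least three elements; pick two distinct such elements $i,j$ and let $\tau:=(i\;j)$ be the corresponding transposition. Then $\fix(\tau)=X\setminus\{i,j\}$ has cardinality $n-2$, and since $i,j\notin\fix(\alpha)$ we have $\fix(\alpha)\subseteq\fix(\tau)$; the inclusion is strict because $|\fix(\tau)|=n-2>|\fix(\alpha)|$. Thus $\tau\prec\alpha$ with $\tau$ a non-identity permutation (hence a non-unit of the subpremonoid), so $\alpha$ cannot be a quark.

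The proof is essentially a one-liner from \eqref{Eq: height-permutation}, so there is no real obstacle; the only mild subtlety is to keep track of which premonoid one is working in, namely the subpremonoid of $H_{\rfix}$ whose ground monoid is $H^\times$, where the only unit is $\id_X$ and where height is therefore governed by fixed-point counts within the symmetric group (and \emph{not} within $H^\sharp$, where the formula \eqref{eq: height} would give a different value).
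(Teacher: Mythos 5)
Both of your arguments are correct, and your ``intrinsic'' version is essentially the paper's own proof: the paper also splits into the two directions, using the fact that a permutation fixing $n-1$ points is the identity for the ``if'' part, and exhibiting a transposition $\beta$ with $\fix(\alpha)\subsetneq\fix(\beta)$ for the ``only if'' part (the paper normalizes so that $\fix(\alpha)=\llb 1,d\rrb$ and takes $\beta=(n-1\ n)$, whereas you pick an arbitrary pair of non-fixed points $i,j$ and take $\tau=(i\,j)$; the content is the same). Your first, one-line argument via $\hgt(\alpha)=n-1-|\fix(\alpha)|$ and the equivalence ``quark $\iff$ $\hgt=1$'' from Remark~\ref{rem:height}\ref{rem:height(1)} is a legitimate alternative, since \eqref{Eq: height-permutation} is established immediately before the proposition; it is slicker but pushes the combinatorial work (constructing the maximal chain of permutations with strictly increasing fixed-point sets) into the justification of \eqref{Eq: height-permutation} rather than doing it here. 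You are also right to flag the bookkeeping point that height must be computed in the subpremonoid with ground monoid $H^\times$, not in $H^\sharp$.
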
 
\begin{proof}
   By Remark~\ref{rem:height}\ref{rem:height(1)}, $\alpha$ is an $\rfix_H$-quark if and only if $\hgt(\alpha)=1$. In light of \eqref{Eq: height-permutation}, this happens if and only if $|\fix(\alpha)|=n-2$, i.e., $\alpha$ is a transposition. 
\end{proof}

As a consequence of the above characterization, we are able to recover the following well known result as a consequence of Theorem~\ref{thm: strongly Artinian}. We will make use of the fact that $f^2=\id_X$, for every transposition $f$.
\begin{theorem}\label{thm: transpositions}
Given a finite set $X$ with cardinality $n\ge 1$, every non-identity permutation $\alpha$ of $X$ is a composition of at most $n-|\fix(\alpha)|-1$ transpositions.
\end{theorem}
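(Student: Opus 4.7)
The plan is to specialize Theorem \ref{thm: strongly artinian} with $s=2$ to the subpremonoid $\mathcal{K}$ of $H_{\rfix}$ whose ground monoid is $H^\times$. Three ingredients are already in place: $\mathcal{K}$ is strongly artinian by Remark \ref{rem:height}\ref{rem:height(0)} (since $H$ is finite) and Remark \ref{rem:height}\ref{rem:height(1)}; its $\rfix_H$-quarks are exactly the transpositions by Proposition \ref{prop: transpositions}; and the height of a non-identity permutation $\alpha$ is given by \eqref{Eq: height-permutation}. Once the hypothesis of Theorem \ref{thm: strongly artinian} is verified for $s=2$, the conclusion will be that every non-identity $\alpha\in H^\times$ is a product of at most $(s-1)\hgt(\alpha)-(s-2)=\hgt(\alpha)=n-1-|\fix(\alpha)|$ transpositions, which is exactly the claim.

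The only thing to check, therefore, is the factoring hypothesis: for every $\alpha\in H^\times$ which is neither the identity nor a transposition (so that $|\fix(\alpha)|\le n-3$), we must produce non-identity permutations $\beta,\gamma\in H^\times$ with $\alpha=\beta\gamma$, $\gamma\prec\alpha$ in the $\rfix$-preorder, and $\hgt(\beta)+\hgt(\gamma)\le \hgt(\alpha)$. The natural candidate is the standard cycle-shortening trick: choose any $a_1\in X\setminus\fix(\alpha)$, set $a_2:=\alpha(a_1)\ne a_1$, let $\beta:=(a_1\ a_2)$ be the corresponding transposition, and define $\gamma:=\beta\alpha$, so that $\alpha=\beta\gamma$ because $\beta^2=\id_X$.

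A direct verification shows that $a_2\notin \fix(\alpha)$ (otherwise injectivity of $\alpha$ would force $a_1=a_2$), and consequently $\fix(\alpha)\cap\{a_1,a_2\}=\emptyset$. Hence $\gamma(a_1)=\beta(a_2)=a_1$, and $\gamma(x)=\beta(x)=x$ for every $x\in\fix(\alpha)$, giving $\fix(\alpha)\cup\{a_1\}\subseteq\fix(\gamma)$. Therefore $\gamma\prec\alpha$ and $|\fix(\gamma)|\ge|\fix(\alpha)|+1$; moreover $\gamma\ne\id_X$, since otherwise $\alpha=\beta$ would be a transposition, contrary to our assumption. Using $\hgt(\beta)=1$ and \eqref{Eq: height-permutation}, this yields
\[
\hgt(\beta)+\hgt(\gamma)\le 1+\bigl(n-1-(|\fix(\alpha)|+1)\bigr)=n-1-|\fix(\alpha)|=\hgt(\alpha),
\]
which closes the verification. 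The main obstacle is essentially conceptual rather than computational: one has to recognize that left-multiplication by the transposition $(a_1\ \alpha(a_1))$ is the right move for inflating the fixed-point set by exactly one element, which is the classical observation behind writing permutations as products of transpositions.
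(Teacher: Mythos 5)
Your proof is correct and follows essentially the same route as the paper's: both take a non-fixed point $j$ (your $a_1$), form the transposition $\beta$ swapping $j$ with $\alpha(j)$, set $\gamma:=\beta\alpha$, and verify that $\fix(\gamma)$ properly contains $\fix(\alpha)$ so that the height hypothesis of Theorem~\ref{thm: strongly artinian} with $s=2$ is met. If anything, you are slightly more careful than the paper in stating the height relation as an inequality $\hgt(\beta)+\hgt(\gamma)\le\hgt(\alpha)$ (the paper asserts equality, which need not hold, e.g.\ for $\alpha=(1\ 2)(3\ 4)$), but only the inequality is required, so both arguments go through.
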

\begin{proof}
We show that if $\alpha:X\to X$ is a non-identity permutation of $X$ that it not a transposition, then $\alpha=\beta\gamma$, with $\beta$ and $\gamma$ non-identity permutations of $X$ such that $\fix(\alpha)\subsetneq \fix(\beta)$, $\fix(\alpha)\subsetneq \fix(\gamma)$, and $\hgt(\beta)+\hgt(\gamma)=\hgt(\alpha)$. The claim follows then from Theorem~\ref{thm: strongly Artinian} (with $s=2$) and \eqref{Eq: height-permutation}. Set $d:=\fix(\alpha)$. Then, without loss of generality, we can assume $\alpha(i)=i$ for $i\in \llb 1,d\rrb$ and $\alpha(i)\ne i$ for $i\in \llb d+1,n\rrb$, with $d\le n-3$. Fix an element $j\in\llb d+1,n\rrb $ and let $\beta$ be the transposition defined by $\beta(j):=\alpha(j)$ and $\beta(\alpha(j)):=j$ (note that $\alpha(j)\in \llb d+1,n\rrb\setminus\{j\}$). Then $\alpha=\beta(\beta\alpha)$. We then conclude by defining $\gamma:=\beta\alpha$, since $\gamma(j)=\beta\alpha(j)=\beta(\alpha(j))=j$. 
\end{proof}

Factorization into idempotent or invertible elements in non-commutative and non-cancellative monoids cannot be studied using classical factorization theory techniques. Corollary~\ref{cor: Howie}, Theorem~\ref{thm: a refinement of Howie}, and Theorem~\ref{thm: transpositions}, along with the examples mentioned in Section \ref{sec: examples}, further attest to how the language of premonoids serves as a useful and intriguing tool for extending the boundaries of factorization theory.

\section{Invertible linear maps over a finite dimensional vector space}\label{sec: Cartan Dieudonné}
In \cite[Section 5]{Co-Tr-21(a)}, Cossu and Tringali investigated the $\rfix$-premonoid of the multiplicative monoid of the endomorphism ring of a free module of finite rank over a principal ideal domain (PID). 

Let $D$ be a PID, $M$ a free $D$-module of finite rank $n\ge 1$, and let $R$ be the ring ${\rm End}(M)$ of all $D$-module endomorphisms of $M$ endowed with the operation of functional composition. We gather in the the following remark some facts from \cite[ Section 5.2]{Co-Tr-21(a)} concerning $R$.

\begin{remark}\label{rem:endo}

\begin{enumerate*}[label=\textup{(\arabic{*})}, mode=unboxed]
\item\label{rem:endo 1} For every $f\in R$, $\fix(f):=\{x\in M: f(x)=x\}$ is a direct summand of $M$. Moreover, if $\fix(f)\subsetneq \fix(g)$ for some $g\in R$, then $\fix(g)=\fix(f)\oplus N$ for some $\{0\}\ne N\subseteq M$. 
\end{enumerate*}

\begin{enumerate*}[label=\textup{(\arabic{*})}, mode=unboxed,resume]
\item\label{rem:endo 2} If $\preceq$ denotes the $\rfix$-preorder on $R$ and $f,g\in R$, then $f\preceq g$ if and only if $\fix(g)\subseteq\fix(f)$. The unique unit of the $\rfix$ premonoid $R_{\rfix}$ of $R$ is then the identity $\id_M$. 
\end{enumerate*}

\begin{enumerate*}[label=\textup{(\arabic{*})}, mode=unboxed, resume]
\item\label{rem:endo 3} The premonoid $R_{\rfix}$ is strongly Artinian. For every $f\in R$, the height $\hgt(f)$ of $f$ with respect to $R_{\rfix}$ is finite. Namely, $\hgt(f)\leq n-\rk(\fix(f))$, where $n:=\rk(M)$.
\end{enumerate*}
\end{remark}

The main results in \cite[Section 5]{Co-Tr-21(a)} concern the singular subpremonoid of $R_{\rfix}$, i.e., the subpremonoid having the monoid $R^\sharp:=\{f\in R: \rk(f(M)\le n-1\}\cup\{{\rm id}_M\}$ as ground monoid. In light of the results in the last part of Section \ref{sec: idempotent}, it makes sense to ask what information can we get from the characterization of $\rfix_R$-quarks and $\rfix_R$-irreducibles in the group of units $R^\times$ of the endomorphism ring $R$ of $M$. The next proposition gives a characterization of $\rfix_R$-quarks of $R^\times$. 

\begin{proposition}\label{pro: quarks in endo}
An endomorphism $f\in R^\times$ is an $\rfix_R$-quark of $R^\times$ if and only if $\rk(\fix(f))=n-1$.
\end{proposition}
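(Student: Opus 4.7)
My plan is to unpack the quark condition inside the subpremonoid with ground monoid $R^\times$. By the remark, for $f,g\in R^\times$ the relation $g\prec f$ amounts to $\fix(f)\subsetneq\fix(g)$, and the unique unit of the subpremonoid is $\id_M$, singled out by $\fix(\id_M)=M$. Thus $f\in R^\times$ is an $\rfix_R$-quark of $R^\times$ if and only if $f\neq\id_M$ and there is no $g\in R^\times\setminus\{\id_M\}$ with $\fix(f)\subsetneq\fix(g)$.

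For the ``if'' direction, I assume $\rk(\fix(f))=n-1$ and take any $g\in R^\times$ with $\fix(f)\subsetneq\fix(g)$. Since $\fix(f)$ is a pure submodule of $M$ of corank $1$, the quotient $M/\fix(f)$ is free of rank $1$, so the nonzero submodule $\fix(g)/\fix(f)$, being an ideal of $D$, has rank $1$; by additivity of rank on short exact sequences, $\rk(\fix(g))=n$. Because $\fix(g)$ is itself pure in $M$, the torsion-free quotient $M/\fix(g)$ has rank $0$ and is therefore trivial, whence $\fix(g)=M$ and $g=\id_M$, a contradiction. So $f$ is a quark.

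For the ``only if'' direction, I argue the contrapositive and suppose $\rk(\fix(f))\leq n-2$. Purity of $\fix(f)$ makes $M/\fix(f)$ finitely generated and torsion-free, hence free of rank $m\geq 2$, so $\fix(f)$ splits off as a direct summand $M=\fix(f)\oplus N$ with $N$ free of rank $m$. After choosing a basis $e_1,\ldots,e_m$ of $N$, I define $g\in R$ by $g|_{\fix(f)}=\id$, $g(e_i)=e_i$ for $i\leq m-2$, $g(e_{m-1})=e_m$, and $g(e_m)=e_{m-1}$. The map $g$ is an involution, hence belongs to $R^\times$, and differs from $\id_M$. Moreover $\fix(g)$ contains $\fix(f)\oplus\bigoplus_{i\leq m-2}De_i\oplus D(e_{m-1}+e_m)$, which strictly contains $\fix(f)$. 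This yields $g\prec f$, showing that $f$ is not a quark.

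The principal subtleties are both tied to the purity of the submodules $\fix(\cdot)$: in the ``if'' direction, purity is what rules out a spurious $g$ whose fixed submodule properly contains $\fix(f)$ while still having rank $n-1$; in the ``only if'' direction, it is what allows $\fix(f)$ to be complemented so that a suitable automorphism of a rank-$2$ portion of the complement produces the required $g$. Using a transposition of two basis vectors rather than multiplication by $-1$ keeps the construction valid over every PID, including those of characteristic two.
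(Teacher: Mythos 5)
Your proof is correct and follows the same overall strategy as the paper: for the ``only if'' direction, argue contrapositively by producing an explicit non-identity $g\in R^\times$ with $\fix(f)\subsetneq\fix(g)$ whenever $\rk(\fix(f))\le n-2$; for the ``if'' direction, use purity of $\fix$-submodules to show a strict inclusion from a rank-$(n-1)$ pure submodule forces $\fix(g)=M$. The one genuine difference is the construction of $g$: the paper fixes a basis adapted to $\fix(f)$ and sets $g(e_n)=-e_n$, $g(e_i)=e_i$ otherwise, whereas you swap two basis vectors of a free complement of $\fix(f)$. Your choice is the better one: the sign-flip $e_n\mapsto -e_n$ degenerates to the identity when $D$ has characteristic $2$ (e.g.\ $D=\mathbb{F}_2[x]$), so strictly speaking the paper's construction has a small gap over such PIDs, while your transposition works uniformly. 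Your ``if'' direction also makes explicit the rank-additivity and purity steps that the paper leaves implicit.
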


\begin{proof}
Let $f\in R$ be an invertible endomorphism that is an $\rfix_R$-quark of $R^\times$, and assume as a contradiction that $d:=\rk(\fix(f))\le n-2$. By Remark~\ref{rem:endo}\ref{rem:endo 1}, there exists a basis $\{e_1,\dots, e_n\}$ of $M$ such that $\{e_1,\dots,e_d\}$ is a basis of $\fix(f)$ and define $g(e_i)=e_i$ for $i\in\llb 1,n-1\rrb$, and $g(e_n)=-e_n$. Then $g\in R^\times\setminus \{\id_M\}$ and $\fix(f)\subsetneq\fix(g)$, a contradiction. On the other hand, if $f$ is an invertible endomorphism of $M$ such that $\rk(\fix(f))=n-1$, then the only element $g\in R^\times$ such that $\fix(f)\subsetneq \fix(g)$ is the identity $\id_M$. 
\end{proof}

Unlike the case of $\rfix_R$-quarks, providing a meaningful characterization of the $\rfix_R$-irreducibles over the entire submonoid $R^\times$, in the most general setting where $D$ is any PID, is much more challenging. We leave this problem open for future work and focus on a very particular case, where we manage to obtain the Cartan-Dieudonné theorem as a corollary of Theorem~\ref{thm: strongly Artinian}. For, let $D=\R$, $R:=\End(\R^n)$, and focus our attention to the subgroup $\mathcal{O}(n)$ of orthogonal transformations of $\R^n$, i.e., the invertible linear maps of $\R^n$ preserving the inner product $\langle \cdot \rangle$: $\langle f(x),f(y)\rangle=\langle x,y\rangle$ for every $f\in \mathcal{O}(n)$ and $x,y\in\R^n$. We collect some easy observations in the following remark.

\begin{remark}\label{rem: orthogonal}
The elements of $\mathcal{O}(n)$ having $\fix$ of dimension $n-1$ are exactly the \evid{reflections}, i.e., orthogonal transformations of $\R^n$ fixing a space of dimension $n-1$. It is easy to verify (the argument is analogous to that in Proposition~\ref{pro: quarks in endo}) that a non-identity orthogonal transformation $f$ of $\R^n$ is an $\rfix_R$-quark of $\mathcal{O}(n)$ if and only if $f$ is a reflection.
\end{remark}

The following result shows that also every $\rfix_R$-irreducible of $\mathcal{O}(n)$ is a reflection. We will use in the proof that $r^2=\id_{\R^n}$ for every reflection $r\in\mathcal{O}(n)$. 
\begin{proposition}
Every  $\rfix_R$-irreducible of $\mathcal{O}(n)$ is a reflection.
\end{proposition}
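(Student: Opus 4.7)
The plan is to proceed by contradiction. I would assume $f \in \mathcal{O}(n)$ is $\rfix_R$-irreducible but not a reflection, so that $\dim \fix(f) \le n-2$, and then exhibit a factorization $f = g \circ h$ with $g,h \in \mathcal{O}(n) \setminus \{\id_{\R^n}\}$ satisfying $\fix(f) \subsetneq \fix(g)$ and $\fix(f) \subsetneq \fix(h)$. Since the $\rfix_R$-preorder on $R$ satisfies $a \preceq b$ iff $\fix(b) \subseteq \fix(a)$, these strict inclusions read $g \prec f$ and $h \prec f$, which contradicts the definition of an $\rfix_R$-irreducible of degree~$2$. Together with Remark~\ref{rem: orthogonal}, this forces $\dim \fix(f) = n-1$, i.e.\ $f$ is a reflection.

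To build the factorization, I would mimic the classical ``peeling-off'' step in Cartan-Dieudonn\'e. Choose any $v \in \R^n$ with $f(v) \ne v$, set $w := f(v) - v \ne 0$, and let $r \in \mathcal{O}(n)$ be the reflection across $w^\perp$. The two short orthogonality identities I would verify are: \textit{(i)} $\fix(f) \subseteq w^\perp$, using that for $x \in \fix(f)$ one has $\langle x, f(v) \rangle = \langle f(x), f(v)\rangle = \langle x, v\rangle$, hence $\langle x, w\rangle = 0$; and \textit{(ii)} $(r \circ f)(v) = v$, using $\|f(v)\| = \|v\|$ to extract $\langle f(v), w\rangle = \tfrac12 \langle w, w\rangle$ and substituting into the explicit formula for $r$.

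Now set $g := r$ and $h := r \circ f$. Since $r^2 = \id_{\R^n}$, one has $f = g \circ h$. By \textit{(i)}, $\fix(g) = w^\perp$ contains $\fix(f)$ and has dimension $n-1 > \dim \fix(f)$, so $\fix(f) \subsetneq \fix(g)$; and $g \ne \id_{\R^n}$ because $w \ne 0$. By \textit{(i)} and \textit{(ii)}, $\fix(h)$ contains $\fix(f) \cup \{v\}$, and since $v \notin \fix(f)$ one obtains $\fix(f) \subsetneq \fix(h)$. Finally $h \ne \id_{\R^n}$, for otherwise $f = r^{-1} = r$ would itself be a reflection, contrary to assumption. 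This produces the forbidden factorization and closes the argument.

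The only genuine obstacle is identifying the correct reflection $r$; once $w = f(v) - v$ is fixed, identities \textit{(i)} and \textit{(ii)} are standard and immediate from the isometry property of $f$. Conceptually, this is exactly the inductive step that drives the usual proof of Cartan-Dieudonn\'e, repackaged as the statement that every non-reflection element of $\mathcal{O}(n)$ admits a decomposition in which \emph{both} factors strictly enlarge the fixed subspace\,---\,precisely the obstruction ruling out $\rfix_R$-irreducibility.
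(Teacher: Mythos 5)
Your proposal is correct and follows essentially the same route as the paper: both arguments pick a vector $v$ moved by $f$, form $w = f(v)-v$, take $r$ to be the reflection across $w^\perp$, and split $f = r\circ(r\circ f)$ into two non-identity orthogonal maps whose fixed spaces strictly contain $\fix(f)$, contradicting $\rfix_R$-irreducibility. Your verification via the two intrinsic inner-product identities $\fix(f)\subseteq w^\perp$ and $(r\circ f)(v)=v$ is cleaner than the paper's explicit basis-and-coordinates computation, but conceptually it is the same Cartan--Dieudonn\'e peeling-off step.
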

\begin{proof}
Assume by way of contradiction that $f$ is an $\rfix_R$-irreducible of $\mathcal{O}(n)$ such that $d:=\rk(\fix(f))\leq n-2$. Since $\R^n=\fix(f)\oplus W$ with $W\ne\{0\}$, there exists a non-zero element $w\in W$ such that $f(w)\ne w$. Set $u=f(w)-w$. Since $f(W)\subseteq W$, $u\in W$ and there exists a basis $\{e_1,\dots, e_{n-1},u\}$ of $\R^n$ such that $\{e_1,\dots,e_d\}$ is a basis of $\fix(f)$ and $\{e_{d+1}, \dots, e_{n-1}, u\}$ is a basis of $W$. Note that, under our assumptions, $\llb d+1,n-1\rrb\ne\emptyset$. Let $g: \R^n\to \R^n$ be the reflection that fixes the orthogonal space of $u$, namely, $g(e_i)=e_i$ for $i\in\llb 1,n-1\rrb$ and $g(u)=-u$. Using that $w,f(w)\in W$, $f(w)-w=u$ and $\|f(w)\|^2=\|w\|^2$, it is easy to see that $w=\sum_{i=d+1}^{n-1}a_1 e_1-u/2$ and $f(w)=\sum_{i=d+1}^{n-1}a_1 e_1+u/2$ for some $a_i\in\R$. 
It follows then that $f=g(gf)$ with $g$ and $gf$ non-identities of $\mathcal{O}(n)$ such that $\fix(f)\subsetneq \fix(g),\fix(gf)$, and this contradicts the $\rfix_R$-irreducibility of $f$. 
\end{proof} 

The proof of the previous theorem actually shows that every orthogonal map $f\in\mathcal{O}(n)$ that is neither the identity nor an $\rfix_R$-quark factors as $f=gh$ with $g$ and $h$ non-identity maps in $\mathcal{O}(n)$ satisfying the hypotheses of Theorem~\ref{thm: strongly Artinian} (for $s=2$). Therefore, we can recover as a corollary of our abstract result the classical theorem from the 1930s due to Cartan and Dieudonné recalled in the introduction.
\begin{corollary}[Cartan-Dieudonné]\label{cor: CD}
Every element of the orthogonal group $\mathcal{O}(n)$ is a composition of at most $n$ reflections.
\end{corollary}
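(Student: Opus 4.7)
The plan is to derive the corollary from Theorem~\ref{thm: strongly artinian} with $s=2$, applied to the subpremonoid $\mathcal{K}$ of $R_{\rfix}$ whose ground monoid is $\mathcal{O}(n)$. First, $\mathcal{K}$ is strongly artinian by Remark~\ref{rem:height}\ref{rem:height(1)} (since $R_{\rfix}$ is, as recalled at the beginning of this section), its unique $\rfix_R$-unit is $\id_{\R^n}$, and by Remark~\ref{rem: orthogonal} together with the preceding proposition, its $\rfix_R$-quarks and $\rfix_R$-irreducibles both coincide with the reflections in $\mathcal{O}(n)$.

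The second step is to verify the hypothesis of Theorem~\ref{thm: strongly artinian} for $s=2$. For $f\in\mathcal{O}(n)$ that is neither the identity nor a reflection, the construction in the proof of the preceding proposition writes $f=g\,(gf)$, where $g$ is the reflection fixing $u^\perp$ and $gf$ is a non-identity element of $\mathcal{O}(n)$ with $\fix(f)\subsetneq\fix(g)$ and $\fix(f)\subsetneq\fix(gf)$; in particular $gf\prec f$ in the $\rfix_R$-preorder. Since $g$ is a reflection and hence a quark, we have $\hgt_\mathcal{K}(g)=1$, while Remark~\ref{rem:height}\ref{rem:height(2)} (applied to $\mathcal{K}$) together with $gf\prec f$ gives $\hgt_\mathcal{K}(gf)\le\hgt_\mathcal{K}(f)-1$. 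Therefore $\hgt_\mathcal{K}(g)+\hgt_\mathcal{K}(gf)\le\hgt_\mathcal{K}(f)$, which is precisely the height inequality required by the theorem for $s=2$.

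Applying Theorem~\ref{thm: strongly artinian} now yields that every non-identity $f\in\mathcal{O}(n)$ is a composition of at most $(s-1)\hgt_\mathcal{K}(f)-(s-2)=\hgt_\mathcal{K}(f)$ reflections, while $\id_{\R^n}$ is trivially the empty composition. To close the argument I will combine the bound $\hgt_{R_{\rfix}}(f)\le n-\rk(\fix(f))\le n$ recorded at the start of this section with the monotonicity $\hgt_\mathcal{K}(f)\le\hgt_{R_{\rfix}}(f)$ of Remark~\ref{rem:height}\ref{rem:height(1)} to obtain $\hgt_\mathcal{K}(f)\le n$, giving the desired bound of $n$ reflections. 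The only real obstacle---producing a factorization of a non-quark orthogonal map compatible with the height structure---has already been handled in the proof of the preceding proposition, so the remainder is a direct assembly of the pieces.
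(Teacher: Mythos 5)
Your proposal is correct and follows essentially the same route as the paper: apply Theorem~\ref{thm: strongly artinian} with $s=2$ to the subpremonoid of $R_{\rfix}$ on $\mathcal{O}(n)$, using the factorization $f=g\,(gf)$ from the proof of the preceding proposition to verify the height hypothesis, identifying the quarks with reflections, and bounding $\hgt_{\mathcal{K}}(f)\le\hgt_{R_{\rfix}}(f)\le n$. You merely spell out the height arithmetic (via $\hgt(g)=1$ and $gf\prec f$) that the paper leaves implicit in the sentence preceding the corollary.
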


The results obtained for the orthogonal group $\mathcal{O}(n)$ of $\R^n$ can be extended verbatim to the orthogonal group of an $n$-dimensional, non-degenerate symmetric bilinear space over a field with characteristic different from $2$. The example illustrated above, is however sufficient for the scope of this note.

\section*{Acknowledgments}
The author acknowledges the support received from the European Union's Horizon 2020 program (Marie Sk\l{}odowska-Curie grant 101021791) and the Austrian Science Fund FWF (grant DOI 10.55776/ P\-AT\-975\-6623). Additionally, she is a member of the Gruppo Nazionale per le Strutture Algebriche, Geometriche e le loro Applicazioni (GNSAGA) at the Italian Mathematics Research Institute (INdAM).

The author is grateful to the referee for their meticulous reading and insightful feedback, which significantly improved the paper's presentation. Finally, special appreciation goes to Salvo (Tringali), not only for his valuable comments during the drafting of this manuscript but particularly for the enriching collaboration over the past few years. Engaging in mathematics is always enjoyable; doing it while having fun is even nicer.

\end{document}